\let\ps@plain\ps@fancy
\newtheorem{theorem}{Theorem}[section]
\newtheorem{lemma}[theorem]{Lemma}
\newtheorem{proposition}[theorem]{Proposition}
\newtheorem{corollary}[theorem]{Corollary}
\theoremstyle{definition}
\newtheorem{definition}[theorem]{Definition}
\theoremstyle{remark}
\newtheorem{remark}[theorem]{Remark}
\newtheorem{example}[theorem]{Example}
\theoremstyle{Conjecture/open problem}
\def\op{\textrm{op}}
\def\R{\mathbb{R}}
\renewcommand{\AA}{{\mathscr A}}
\def\FF{{\mathscr F}}
\def\LL{\mathscr L}
\def\deg{\operatorname{deg}}
\def\BB{\mathscr B}
\def\LIP{\operatorname{LIP}}
\def\rk{\operatorname{rk}}
\def\deg{\operatorname{deg}}
\def\dim{\operatorname{dim}}
\begin{document}

\title{Fundamental polytopes of metric trees \\ via parallel connections of matroids}
\author[Emanuele Delucchi]{Emanuele Delucchi}
\address{(Emanuele Delucchi) Department of Mathematics, University of Fribourg, Chemin du Mus\'ee 23, CH-1700, Fribourg, CH.}
\email{emanuele.delucchi@unifr.ch}

\author[Linard Hoessly]{Linard Hoessly}
\address{(Linard Hoessly) Department of Mathematics, University of Fribourg, Chemin du Mus\'ee 23, CH-1700, Fribourg, CH.}
\email{linard.hoessly@unifr.ch}
\date{}

\subjclass{05-XX, 92Bxx, 54E35}
\keywords{Arrangements of hyperplanes, matroids, finite metric spaces, polytopes}

\begin{abstract}

We tackle the problem of a combinatorial classification of finite metric spaces via their {\em fundamental polytopes}, as suggested by Vershik in 2010 \cite{Vershik1}.

In this paper we consider a hyperplane arrangement associated to every split pseudometric and, for tree-like metrics,  we study the combinatorics of its underlying matroid. 
\begin{itemize}
\item
We give explicit formulas for the face numbers of fundamental polytopes and Lipschitz polytopes of all tree-like metrics, 
\item 
We characterize the metric trees for which the fundamental polytope is simplicial. 
\end{itemize}
\end{abstract}
\maketitle

\section{Introduction}

\subsection{Polytopes associated to metric spaces}
The study of {\em fundamental polytopes} of finite metric spaces was proposed by Vershik \cite{Vershik1} as an approach to a combinatorial classification of metric spaces, motivated by its connections to the transportation problem. Indeed, the Kantorovich-Rubinstein norm associated to the finite case of the transportation problem is an extension (uniquely determined by some conditions) of the Minkowski-Banach norm associated to the fundamental polytope (see \cite[Theorem 1]{Vershik2} for details). 
The polar dual of the fundamental polytope affords a more direct description: it consists of all real-valued functions with Lipschitz constant $1$, and it is called {\em Lipschitz polytope}. 
As polar duality preserves all combinatorial data, the combinatorial classification of Lipschitz polytopes is equivalent to that of fundamental polytopes.

Very little is known to date about the combinatorics of these polytopes, aside from the aforementioned work of Vershik. For instance, their {$f$-vectors}\footnote{The {\em $f$-vector} of a polytope (or of any polyhedral complex) is the list of integers encoding the number of faces of each dimension.}  are unknown in general. Gordon and Petrov \cite{Petrov} obtained  bounds for the number of possible different $f$-vectors given the size of the metric space. The same authors also examined ``generic metric spaces'' (see Definition \ref{df:generic}), computing their $f$-vectors (which, in this class, only depend on the number of elements in the space). 
Further study of fundamental polytopes of generic metrics appeared in \cite{JJZ,JTZ} especially around a connection with duals of cyclohedra and Bier spheres. The special case of fundamental polytopes of full trees (Definition \ref{df:tm}) fits into the framework of symmetric edge polytopes, which are themselves in the focus of active research, see e.g.\ \cite{Giapponesi,Michalek}.

In this paper we compute the $f$-vectors of Lipschitz polytopes for all tree-like pseudometric spaces, hence also of fundamental polytopes of tree-like metric spaces. Moreover, we characterize exactly which metric trees give rise fundamental polytopes that are simplicial. In particular, this characterization shows that our computations do not fall under the case considered in \cite{Petrov}.

\subsection{Arrangements of hyperplanes and matroids} We call "arrangement of hyperplanes" a finite set of hyperplanes (i.e., linear codimension 1 subspaces) of a real vectorspace and refer to Section \ref{ss:AH} for some basics about these well-studied objects. Here we only point out that the enumerative combinatorics of an arrangement is governed by the associated {\em matroid}, an abstract combinatorial object encoding the intersection pattern of the hyperplanes (see Section \ref{ss:mat}). 

In particular, such an arrangement subdivides the unit sphere into a polyhedral complex $K_{\AA}$ which is ``combinatorially dual'' (see Remark \ref{rm:az}) to the zonotopes arising as Minkowski sum of any choice of normal vectors for the hyperplanes. The enumeration of the faces of these polyhedral complexes in terms of the arrangement's matroid, due to Thomas Zaslavsky \cite{Zaslavsky}, has been one of the earliest successful applications of matroid theory. More recently, Cuntz and Gies \cite{CG} have given a necessary and sufficient condition for $K_\AA$ to be a simplicial complex, again in terms of enumerative invariants of the associated matroid.

The idea of this paper is to notice that fundamental polytopes and Lipschitz polytopes of finite metric trees are intimely related to the complex $K_\AA$ of an arrangement that is canonically associated to the metric space, and then to exploit the fact that the arrangement's matroid has a nice decomposition as a parallel connection of simple sub-matroids.

\subsection{Structure of the paper and main results} We start Section \ref{sec_mc} by recalling the main definitions and some results about polytopes associated to metric spaces, tree-like metrics, systems of splits, arrangements of hyperplanes and matroids. In particular, we focus on an arrangement of hyperplanes $\AA(\mathcal S)$ that can be associated to any system of splits $\mathcal S$. This arrangement and the associated matroid $\mathscr M(\mathcal S)$ (which were already considered in a different context \cite{Koichi}), provide the combinatorial underpinning of our considerations. 

\begin{itemize}
\item[(1)] We notice that the fundamental polytope of any tree-like finite metric space is combinatorially isomorphic to the complex $K_{\AA(\mathcal S)}$, where $\mathcal S$ is the unique system of splits in the Bandelt-Dress decomposition of the given space. This is because the Lipschitz polytope of such spaces is the zonotope defined as the Minkowski sum of a certain choice of normal vectors for the hyperplanes of $\AA(\mathcal S)$: this is the content of Theorem \ref{Lipschitz}, see also Remark \ref{rem_mago}.
\item[(2)] We compute the intersection poset of $\AA(\mathcal S)$ (and the closure operator of the associated matroid) from the combinatorics of the split system (Theorem \ref{intersection}). 
\item[(3)] We prove that the matroid of $\AA(\mathcal S)$ decomposes as a ``parallel connection'' of elementary building blocks that can be read off the (unique) metric tree representing the metric at hand. This allows us to give explicit formulas for the face numbers of fundamental- and Lipschitz polytopes in terms of the combinatorial structure of the tree (Theorem \ref{theo:formel}), building on Zaslavsky's theorems and on results of Bonin and De Mier on characteristic polynomials of parallel connections.
\item[(4)] Our formulas allow us to use Cuntz and Gies' criterion for simpliciality of arrangements in order to prove that the fundamental polytope of a tree-like metric is simplicial if and only if every vertex of the metric tree is associated an element of the metric space (Theorem \ref{theo:simp}). 

Our characterization shows in particular that no tree-like metric is generic in the sense of Gordon and Petrov \cite{Petrov} (Corollary \ref{cor:generic}).
\end{itemize}

\subsection{Related work} The study of metric spaces  by means of associated polyhedral complexes is a classical topic, going back at least to work of Buneman \cite{Buneman} and Bandelt-Dress \cite{Dress}, and driven in part by application to the study of phylogenetic trees. After a first version of this paper circulated, we learned about further recent literature that helped us to contextualize our work. Koichi \cite{Koichi} recently gave a uniform description of the approaches by Buneman and Bandelt-Dress, building on Hirai's \cite{Hirai} polyhedral split decomposition method, where a metric is viewed as a polyhedral "height function" defined on a point configuration. In \cite{Koichi} we also find the defining forms of the arrangement $\AA(\mathcal S)$. (On the other hand, the hyperplanes associated to splits in \cite[p.\ 350]{Hirai} do not coincide with ours.) 
Motivated by the connections to tropical convexity \cite{DeStu,StuYu}, Herrmann and Joswig \cite{HJ} studied split complexes of general polytopes and, in the process, consider an arrangement of ``split hyperplanes'' associated to every split metric. In this respect we notice that, even if each of our hyperplanes  can be expressed in the form \cite[Equation (9)]{HJ}, the arrangement $\AA(\mathcal S)$ is not one of the arrangements considered in \cite{HJ} (see Remark \ref{rem:HJ}).  Moreover, the matroid we consider is different from the matroid whose basis polytope is cut from the hypersimplex by a set of compatible split hyperplanes, which is studied by Joswig and Schr\"oter in  \cite{JS}.

Lipschitz polytopes of finite metric spaces are weighted digraph polyhedra in the sense of Joswig and Loho \cite{JL}, who give some general results about dimension, face structure and projections \cite[\S 2.1, 2.2, 2.6]{JL} but mostly focus on the case of ``braid cones'' which does not apply to our context. We close by mentioning that the polyhedra  considered, e.g., in the above-mentioned work of Hirai \cite[Formula (4.1)]{Hirai} are different from the Lipschitz polytopes we consider here: in fact, such polyhedra are (translated) zonotopes for all split-decomposable metrics \cite[Remark 4.10]{Hirai}, while -- for instance --  the Lipschitz polytope of any split-decomposable  metric on $4$ points is only a zonotope if  the associated split system is compatible.

\subsection{Acknowledgements} 
We are greatly indebted to Joseph Bonin for pointing out the decomposition of $\mathscr M(\mathcal S)$ as a parallel connection of elementary matroids. 
 We thank Andreas Dress  for a friendly e-mail exchange and Yaokun Wu for discussing an announcement of his joint work with Zeying Xu on metric spaces with zonotopal Lipschitz polytopes, as well as for pointing out \cite{Wu}. 
After a first version of this paper was put on ArXiv, we received many valuable pointers to relevant extant literature: we thank two anonymous referees for their comments, Mateusz Micha\l{}ek for pointing out \cite{Michalek,Giapponesi} as well as Michael Joswig and Benjamin Schr\"oter for very informative discussions that took place during the program on tropical geometry at the institute Mittag-Leffler, whose excellent hospitality we also acknowledge.  

Both authors are supported by the Swiss National Science Foundation Professorship grant PP00P2\_150552/1.

\section{The main characters}\label{sec_mc}
\subsection{Metric spaces and their polytopes}

\begin{definition}\label{df:metric} Let $X$ be a set. A {\em metric} on $X$ is a symmetric function $d: X\times X \to \mathbb R_{\geq 0} $ with the following properties. 
\begin{itemize}
\item[(1)] For all $x,y\in X$, $d(x,y)=0$ implies $x=y$.
\item[(2)] For all $x,y,z\in X$, $d(x,y) + d(y,z) \geq d ( x,z)$ (``triangle inequality").
\end{itemize}
If requirement (1) is dropped, then $d$ is called a {\em pseudometric}. 
The pair $(X,d)$ is then called a metric space (resp.\ pseudometric space).\end{definition}

In this paper we will focus on {\em finite} metric spaces, i.e., metric spaces $(X,d)$ where $\vert X \vert <\infty$. We will tacitly assume so throughout.

\begin{definition}\label{df:poly} Let $(X,d)$ be a (finite) metric space. Consider the vectorspace $\mathbb R^X$ with its standard basis $\{\mathbbm{1}_k\}_{k\in X}$, i.e.,
$$(\mathbbm{1}_{k})_i:=\begin{cases} 1 & \mbox{if }i = k  \\
0 & \mbox{otherwise. } \end{cases} $$

Following Vershik \cite{Vershik1} we define the {\em fundamental polytope} of $(X,d)$ as 
$$P_d(X):=\operatorname{conv}\{ e_{i,j} \mid i,j\in X,\, i\neq j\},$$ 
where $$e_{i,j}:=\frac{\mathbbm{1}_i-\mathbbm{1}_j }{ d(i,j)}.$$

This polytope is contained (and full-dimensional) in the subspace
$$
V_0(X) = \{x\in \mathbb{R}^X \mid \textstyle{\sum}_i{x_i}=0\}.
$$

\end{definition}

\begin{definition} Let $(X,d)$ be a (finite) pseudometric space.

The {\em Lipschitz polytope}  of $(X,d)$ is given as an intersection of halfspaces by
\begin{equation}\label{df_lip}\LIP(X,d):=\left\{x\in \mathbb{R}^X \mid \textstyle{\sum}_i{x_i}=0, \, x_i-x_j\leq d(i,j) \,\,\forall i,j \in X\right\}.\end{equation}

This polytope is contained (and full-dimensional) in the subspace
$$V(X,d):=\{x\in \mathbb{R}^X  \mid \textstyle{\sum}_i{x_i}=0,\,\,
x_i=x_j\textrm{ whenever }d(i,j)=0\}.$$

\end{definition}

\begin{remark}[On Lipschitz polytopes] For  metric spaces our definition specializes to the standard definition of the Lipschitz polytope, e.g., as given in \cite{Vershik1}. We remark that, although related, this is not the set of Lipschitz functions considered in the work of Wu, Xu and Zhu on graph indexed random walks \cite{Wu}. 
\end{remark}

\begin{remark}[On polytopes] We point the reader to the book by Ziegler \cite{Ziegler} for terminology and basic facts about polytopes and fans. Here let us only mention that the combinatorics of a given polytope $P$ is encoded in its {\em poset of faces} $\FF(P)$ which, here, we take to be the set of all faces of $P$ including the empty face ordered by inclusion. A rougher, but very important enumerative invariant of a polytope are its {\em face numbers} $f_0^P,\ldots,f_{\dim(P)}^P$, where
$$
f_i^P = \vert \{i-\textrm{dimensional faces of }P\} \vert.
$$
It is customary to consider the empty face as a face of ``dimension $-1$'',  thus to write $f_{-1}^P = 1$ and to fit these numbers into the {\em $f$-polynomial} of $P$, defined as
$$f^P(t):= f_{-1}^P t^{m+1} + f_{0}^P t^{m} + \ldots +  f_{m}^P $$
where we write $m:=\dim(P)$. 
\end{remark}

The problem posed by Vershik \cite{Vershik1} is to study the face numbers and face structure of the fundamental polytope of a metric space. We will do so by focussing on the associated Lipschitz polytope, whose combinatorics is ``dual'' to that of the fundamental polytope in the following sense.

\begin{remark}\label{rem:lp} A look at Theorem 2.11.(vi) of \cite{Ziegler} shows that indeed, for every {\em metric} space $(X,d)$ the polytopes $P_d(X)$ and $\LIP(X,d)$ are {\em polar dual} to each other (with respect to the ambient space $V_0(X)$, cf.\ \cite[Definition 2.10]{Ziegler}). Polar duality induces an isomorphism of posets
$
\FF(P_d(X)) \cong \FF(\LIP(X,d))^{op}
$
and, in particular, the equality $f_i^{P_d(X)} = f_{\dim(P_d(X))-1-i}^{\LIP(X,d)}$.
\end{remark}

\begin{example}[Metric spaces from weighted graphs]  
Let $G$ be a finite, connected and simple graph. Write $V(G)$ and $E(G)$ for the set of vertices, resp.\ edges of $G$.

 A {\em weighting} of $G$ is any function $w: E(G) \to \mathbb R_{>0}$, and the pair $(G,w)$ is called a {\em weighted graph}. Then, setting
$$
d_w(v,v'):=\min \left\{w(e_1)+\ldots+ w(e_k) \mid e_1,\ldots,e_k \textrm{ an edge-path joining }v\textrm{ with } v'\right\}
$$
the pair $(V(G),d_w)$ is a metric space.
\end{example}

Recall that a {\em tree} is a graph in which every pair of vertices is connected by a unique path. 

\begin{definition}[Tree-like metrics and $X$-trees]\label{df:tm}
Let $X$ be a finite set. An {\em $X$-tree} is a pair $(T,\phi)$, where $T$ is a tree and $\phi:X\to V(T)$ is a map whose image contains every vertex of $V$ that is incident to at most two edges, i.e., $\{v\in V(T) \mid \deg(v) \leq 2 \} \subseteq \phi(X) $.

A (pseudo)metric $d$ on a set $X$
is called a {\em tree-like (pseudo)metric} if there exists an $X$-tree $(T,\phi)$ and a weighting $w$ of $T$ such that for all $x,y\in X$
$$d(x,y)=d_w(\phi(x),\phi(y)).$$
(``$d$ is induced by a weighted $X$-tree''). The pseudometric $d$ is a metric if and only if $\phi$ is injective. When $\phi$ is bijective, we call $(X,d)$ a {\em full tree}.
\end{definition}

\subsection{Arrangements of hyperplanes}\label{ss:AH}

Let $V$ denote a finite-dimensional real vectorspace, say of dimension $m$. 
An {arrangement of hyperplanes} (or, for short, {\em arrangement}) in $V$, is a finite set $\AA$
of hyperplanes (i.e., linear subspaces of codimension $1$). Such an arrangement defines a polyhedral fan in $V$, and we let $\FF(\AA)$ denote the poset of all faces of this fan, partially ordered by inclusion. 
We write $f^{\AA}_i$ for the number of faces of this fan of dimension $i$, for all $i=0,\ldots,m$,
and we arrange these numbers into the {\em $f$-polynomial} of $\AA$,
$$
f^{\AA}(t) := f^{\AA}_0t^m + f^{\AA}_1t^{m-1}+\ldots + f^{\AA}_m.
$$

The {\em poset of intersections} of $\AA$ is the set
$$
\LL(\AA) :=\{\cap \BB \mid \BB\subseteq \AA \},\quad x\leq y \Leftrightarrow x\supseteq y
$$
of all subspaces that arise as intersections of hyperplanes in $\AA$, partially ordered by reverse inclusion. 
The poset $\LL(\AA)$ is {\em ranked} by the function 
$
\rk(x):= m - \dim(x).
$
and we define the {\em rank of $\AA$} to be $r:=\rk(\cap \AA)$. 
The {\em M\"obius polynomial} of $\AA$ is
\begin{equation}\label{eq:MP}
M_{\AA}(u,v):=\sum_{x, y \in \LL(\AA)} \mu(x,y)u^{\rk(x)}v^{r-\rk(y)}
\end{equation}
where $\mu$ denotes the M\"obius function of $\LL(\AA)$ (see e.g.\ \cite[(3.17)]{Stanley}). 

\begin{theorem}[Zaslavsky {\cite[Theorem A]{Zaslavsky}}]\label{thm_z}
$$f^{\AA}(x)=(-1)^{r}\, M_{\AA}(-x,-1).$$
\end{theorem}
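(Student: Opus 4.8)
The plan is to reduce the statement to Zaslavsky's enumeration of the \emph{chambers} (maximal faces) of a central real arrangement, by first sorting the faces of the fan $\FF(\AA)$ according to their linear spans. The starting observation is that every face $F$ of $\FF(\AA)$ has a linear span $\langle F\rangle$ which is an element of $\LL(\AA)$ --- in fact $\langle F\rangle=\cap\{H\in\AA\mid F\subseteq H\}$, of dimension $\dim F$ --- and that, setting $x:=\langle F\rangle$, the face $F$ is a chamber of the central arrangement
\[
\AA^{x}:=\{\,H\cap x\mid H\in\AA,\ x\not\subseteq H\,\}
\]
in the space $x$. The inclusion $\langle F\rangle\subseteq\cap\{H\mid F\subseteq H\}$ is immediate; for the reverse one uses that a face of the fan lies on a single \emph{open} side of every hyperplane not containing it, so inside $x$ it is precisely the region cut out by the corresponding strict linear inequalities --- hence a chamber of $\AA^{x}$, and in particular full-dimensional in $x$, forcing $\langle F\rangle=x$. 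Conversely, for each $x\in\LL(\AA)$ every chamber of $\AA^{x}$ is a face of $\FF(\AA)$ with span $x$. Since $\dim F=\dim x=m-\rk(x)$, grouping faces by their span gives
\[
f^{\AA}(t)\;=\;\sum_{x\in\LL(\AA)}c\big(\AA^{x}\big)\,t^{\rk(x)},
\]
where $c(\,\cdot\,)$ is the number of chambers.

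Next I would feed each restriction $\AA^{x}$ into the single-variable case of the theorem, the region count: for a central real arrangement $\BB$ in a space of dimension $d$,
\[
c(\BB)\;=\;(-1)^{d}\,\chi_{\BB}(-1),\qquad \chi_{\BB}(t):=\sum_{z\in\LL(\BB)}\mu(\hat 0,z)\,t^{\dim z},
\]
$\hat 0$ denoting the smallest flat of $\BB$. (This is where the real content sits; it admits the familiar short proof by induction on $|\BB|$, combining the deletion--restriction identity $\chi_{\BB}=\chi_{\BB\setminus H}-\chi_{\BB^{H}}$ with $c(\BB)=c(\BB\setminus H)+c(\BB^{H})$, the empty arrangement as base case.) Applying this with $\BB=\AA^{x}$, and recalling that $\LL(\AA^{x})$ is the interval $[x,\hat 1]$ of $\LL(\AA)$ --- so that its bottom element is $x$ and, M\"obius functions of intervals being intrinsic, $\mu_{\LL(\AA^{x})}(x,y)=\mu_{\LL(\AA)}(x,y)$ --- yields
\[
c\big(\AA^{x}\big)\;=\;(-1)^{\dim x}\sum_{y\geq x}\mu(x,y)\,(-1)^{\dim y}.
\]

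Finally, the two formulas are matched by a sign chase. Substituting $\dim x=m-\rk(x)$, $\dim y=m-\rk(y)$ turns $(-1)^{\dim x+\dim y}$ into $(-1)^{\rk(x)+\rk(y)}$, whence
\[
f^{\AA}(t)\;=\;\sum_{x\leq y\text{ in }\LL(\AA)}\mu(x,y)\,(-t)^{\rk(x)}\,(-1)^{\rk(y)},
\]
and, since $(-1)^{r}(-1)^{\,r-\rk(y)}=(-1)^{\rk(y)}$, comparison with \eqref{eq:MP} identifies the right-hand side with $(-1)^{r}M_{\AA}(-t,-1)$. The main obstacle, conceptually, is the region count used in the middle step --- the combinatorial heart of Zaslavsky's theorem, which here is simply cited; by contrast the first step only needs some care with the sign-vector (covector) description of $\FF(\AA)$, and the last step is pure bookkeeping.
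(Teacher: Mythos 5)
This statement is imported by the paper directly from Zaslavsky's work and is given no proof there, so there is nothing internal to compare against; what you have written is essentially the classical proof of Zaslavsky's face-count theorem, and it is correct. Your decomposition step is sound: for a central arrangement the relatively open faces are indexed by covectors, the span of a face $F$ is indeed the flat $x=\cap\{H\in\AA\mid F\subseteq H\}$ (your openness argument inside $x$ is the right one, using that any $H\supseteq x$ already contains $F$), and the faces with span $x$ are exactly the chambers of the restriction $\AA^x$, giving $f^{\AA}(t)=\sum_{x\in\LL(\AA)}c(\AA^x)t^{\rk(x)}$ with the paper's exponent convention $t^{m-\dim F}$. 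The identification $\LL(\AA^x)\cong[x,\hat 1]\subseteq\LL(\AA)$ and the intrinsic nature of the M\"obius function of an interval are correctly invoked, and the final sign bookkeeping matches the definition of $M_{\AA}(u,v)$ in the paper (with the usual convention $\mu(x,y)=0$ for $x\not\leq y$). The one place where you lean on an external result is the chamber count $c(\BB)=(-1)^d\chi_{\BB}(-1)$; since that is itself part of the theorem being cited, this is a reduction of the two-variable statement to its one-variable core rather than a fully self-contained proof, but the deletion--restriction induction you sketch for it is the standard complete argument, so the proposal stands as a legitimate proof of the cited theorem.
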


\subsection{Zonotopes} Associated to every set of nonzero real vectors $v_1,\ldots, v_k \in \mathbb R^m \setminus\{0\}$ there is a polytope obtained as the Minkowski (i.e., pointwise) sum
$$
Z(v_1,\ldots,v_k):=\sum_{i=1}^k [-1,1]v_i
$$
 where $[-1,1] \subseteq \mathbb R$ denotes the $1$-dimensional unit cube
 (see \cite[\S 1.1]{Ziegler}). Polytopes of this form are called {\em zonotopes}.
Strongly related to  
$Z(v_1,\ldots,v_k)$ is the arrangement of normal hyperplanes to the $v_i$, i.e., $\AA:=\{v_i^\perp \mid i=1,\ldots,k\}$. In particular, there is an isomorphism of posets (see, e.g., \cite[Corollary 7.18]{Ziegler})
$$
\FF(\AA)^{\op} \cong \FF(Z(v_1,\ldots, v_k))\setminus \{\emptyset\}
$$
which implies the following relationship among the $f$-polynomials.
\begin{equation}\label{eq_ZA}
f^{Z(v_1,\ldots,v_k)}(t) - t^{r+1}= t^{2m-r} f^{\AA}(\frac{1}{t})
\end{equation}

\subsection{Split systems} 
We introduce a special class of pseudometric spaces, keeping the terminology that is in use in the literature (e.g., \cite{Dress, Buneman}).
\begin{definition}\label{def_split_first}
Let $X$ be a finite set. A {\em split} of $X$ is a bipartition of $X$, i.e., a pair of nonempty and disjoint subsets $A,B\subseteq X $ (the {\em sides} of the split) such that $A\cup B = X$. Such a pair will be written $A|B$. Clearly, $A\vert B$ and $B\vert A$ describe the same split.
In fact, every split $\sigma=A\vert B$ corresponds to a nontrivial equivalence relation $\sim_\sigma$ on $X$, whose equivalence classes are $A$ and $B$. Given a split $\sigma$ and any element $i\in X$  we write $[i]_\sigma$ for the equivalence class of $i$ with respect to the equivalence relation $\sim_\sigma$. Thus, to any split $\sigma$ we can 
associate the function
\begin{equation}\label{delta}
\delta_{\sigma}(i,j)=\begin{cases} 0 &i \sim_\sigma j  \\
1 & \mbox{otherwise.} \end{cases} 
\end{equation}

A split $\sigma$ is called {\em trivial} if one of its sides is a singleton. We will use the shorthand $\sigma=k\vert k^c$ in order to denote a trivial split whose singleton side is $\{k\}$.

Two splits $A\vert B$ and $C\vert D$ are {\em compatible} if at least one of the sets $A\cap C$, $ A\cap D$, $B\cap C$, $B\cap D$ is empty.

A {\em system of splits} on $X$ is just a set of splits of $X$; the system is called \label{compatible}{\em compatible} if its elements are pairwise compatible.

\end{definition}

\begin{definition}\label{df_sww} A {\em weighted split system} is a pair $(\mathcal{S},\alpha)$ where $\mathcal{S}$ is a system of splits on $X$ and  $\alpha\in (\mathbb R_{\geq 0})^{\mathcal{S}}$ is any weighting.   Any such weighted split system defines a symmetric nonnegative function $d_{\alpha}: X\times X \to \mathbb R$ via $$d_\alpha(x,y)=\sum_{\sigma\in \mathcal{S}} \alpha_\sigma \delta_{\sigma}(x,y)$$ where $\delta_\sigma$ is as in Equation \eqref{delta}. The functions of the form $d_\alpha$ are called {\em split-decomposable pseudometrics} associated to $\mathcal{S}$. In fact, the pair $(X,d_\alpha)$ is a pseudometric space. We will write
$$
V(\mathcal S) := V(X,d_\alpha)
$$
as this subspace clearly does not depend on $\alpha$.
 A {\em positively weighted} split system is one where $\alpha_\sigma > 0 $ for all $\sigma \in\mathcal{S}$.
\end{definition}
Such metric spaces are also known as {\em cut (pseudo)metrics} \cite{Deza}.

\begin{theorem}[See {\cite[Theorems 3.1.4, 7.1.8, 7.2.6, 7.3.2]{Steel}}]\label{tree} Let $(X,d)$ be a pseudometric space. The following are equivalent:
\begin{enumerate}[(i)]
\item $d$ satisfies the ``four point condition'': for all $x,y,z,w\in X$,
$$d(x,y)+d(z,w)\leq \max \vert\left\{d(x,z)+d(y,w),d(x,w)+d(z,y)\right\}\vert$$
\item $d$ is a tree-like pseudo-metric on $X$ (in the sense of Definition \ref{df:tm}).
\item $d$ is a split-decomposable pseudometric associated to a positively weighted system of compatible splits. Moreover, this system is unique.
\end{enumerate}

\end{theorem}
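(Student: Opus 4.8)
\emph{Proof plan.} All three characterizations are classical: $(i)\Leftrightarrow(ii)$ is Buneman's four-point condition, while $(i)\Leftrightarrow(iii)$ together with the uniqueness clause is the split decomposition theorem of Bandelt and Dress; see \cite{Buneman,Dress} and \cite{Steel}. The plan is to prove the cycle $(ii)\Rightarrow(i)\Rightarrow(iii)\Rightarrow(ii)$ and then to establish uniqueness. For $(ii)\Rightarrow(i)$ I would write $d(x,y)=d_w(\phi(x),\phi(y))$ for a weighted $X$-tree $(T,\phi)$ and fix four (possibly coinciding) points $x,y,z,w\in X$. Passing to the minimal subtree of $T$ spanning their images, this subtree is combinatorially a point, a segment, or a tree with a single internal edge (of possibly zero length) across which the four images split into two pairs $\{p,q\}\mid\{r,s\}$. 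Reading off tree distances in each case shows that among the three numbers $d(x,y)+d(z,w)$, $d(x,z)+d(y,w)$, $d(x,w)+d(y,z)$ the two largest coincide; applied to all relabelings of $x,y,z,w$, this is precisely $(i)$.

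\emph{The step $(i)\Rightarrow(iii)$} is the heart of the matter. For a split $\sigma=A\mid B$ of $X$, define the isolation index
\[
\alpha_\sigma:=\frac{1}{2}\min_{\substack{a,a'\in A\\ b,b'\in B}}\Big(\max\{d(a,b)+d(a',b'),\ d(a,b')+d(a',b)\}-d(a,a')-d(b,b')\Big),
\]
which is $\geq 0$ for every split, since applying $(i)$ to the quadruple $(a,a',b,b')$ gives $d(a,a')+d(b,b')\leq\max\{d(a,b)+d(a',b'),d(a,b')+d(a',b)\}$. Set $\mathcal S:=\{\sigma\mid\alpha_\sigma>0\}$. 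Using $(i)$ one then checks that (a) any two splits in $\mathcal S$ are compatible: if $A\mid B$ and $C\mid D$ lay in $\mathcal S$ with all four pairwise intersections nonempty, picking one point in each would yield a quadruple violating $(i)$; and (b) $d=\sum_{\sigma\in\mathcal S}\alpha_\sigma\delta_\sigma$: for a fixed pair $i,j\in X$, compatibility forces the splits of $\mathcal S$ that separate $i$ from $j$ to be linearly ordered by inclusion of their $i$-sides, and a standard induction on $|X|$ (equivalently, a telescoping argument along this chain) identifies $d(i,j)$ with $\sum_{\sigma\in\mathcal S}\alpha_\sigma\delta_\sigma(i,j)$. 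Since all weights are positive by construction, $(X,d)=(X,d_\alpha)$ for the compatible system $\mathcal S$.

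\emph{For $(iii)\Rightarrow(ii)$}, by Buneman's theorem a compatible split system $\mathcal S$ on $X$ is exactly the set of edge-induced splits of an $X$-tree $(T,\phi)$ — built by a standard recursive refinement, the degree-$\leq 2$ condition in Definition \ref{df:tm} being precisely the non-redundancy that makes $(T,\phi)$ canonical — and distinct edges induce distinct splits, so $T$ has exactly $|\mathcal S|$ edges. Transporting each weight $\alpha_\sigma$ onto the edge inducing $\sigma$ gives a weighting $w$ of $T$ with $d_w(\phi(i),\phi(j))=\sum_\sigma\alpha_\sigma\delta_\sigma(i,j)=d_\alpha(i,j)$, so $d$ is tree-like. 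For \emph{uniqueness}, suppose $d=\sum_{\sigma\in\mathcal S}\beta_\sigma\delta_\sigma$ with $\mathcal S$ compatible and all $\beta_\sigma>0$; such a $d$ satisfies $(i)$ by the implications already shown, so it suffices to prove that $\beta_\sigma=\alpha_\sigma$ for $\sigma\in\mathcal S$ and $\alpha_\tau=0$ for $\tau\notin\mathcal S$, where $\alpha$ is the isolation index of $d$. This is a direct computation in the $X$-tree $(T,\phi)$ of $\mathcal S$: for $\sigma\in\mathcal S$ one chooses $a,a'\in A$ and $b,b'\in B$ whose images are leaves lying ``as close as possible'' to the edge of $\sigma$, in two different directions on each side, so that in the defining minimum for $\alpha_\sigma$ only the term coming from $\sigma$ survives and the minimum equals $2\beta_\sigma$; and if $\tau\notin\mathcal S$ then $\tau$ is not an edge-split of $T$, which forces $\alpha_\tau=0$. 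Hence $\mathcal S$ and $\alpha$ are uniquely determined by $d$.

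\emph{Main obstacle.} The only genuinely substantive step is $(i)\Rightarrow(iii)$: showing that the positive-isolation-index splits are pairwise compatible and that they reconstruct $d$ is exactly the combinatorial content of the Bandelt--Dress split decomposition, and it requires the bookkeeping sketched in step (b) above. The remaining implications, and the uniqueness computation, are comparatively routine; since the full statement is available in the literature \cite{Buneman,Dress,Steel}, the write-up can afford to be brief and refer the reader there.
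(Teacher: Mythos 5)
This theorem is not proved in the paper at all: it is quoted verbatim from the literature (Steel's book, going back to Buneman and Bandelt--Dress), and your outline is precisely the standard argument from those sources --- tree quartets for $(ii)\Rightarrow(i)$, Buneman indices (your two-term ``isolation index'', which coincides with the Bandelt--Dress one under the four-point condition, as you note) for $(i)\Rightarrow(iii)$ and for uniqueness, and the Buneman tree realization of a compatible split system for $(iii)\Rightarrow(ii)$. So your proposal is correct in outline and matches exactly what the paper relies on via its citations; the admittedly sketched steps (the telescoping reconstruction of $d$ and the edge-split computation for uniqueness) are the standard bookkeeping found in the cited references.
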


\begin{remark} \label{edges_tree}
Under the equivalence of (ii) with (iii), splits in the decomposition of the metric correspond bijectively to edges in the tree.\end{remark}

\subsection{Arrangements associated to split systems} We now define an arrangement of hyperplanes associated to any split system. This set of hyperplanes appeared already in \cite[p.\ 10]{Koichi}, see Remark \ref{rem:Koichi}.

\begin{definition}\label{df_AS}
Let $X$ be a finite set and consider a split $\sigma=A\vert B$ of $X$, where $
|X|=n$. To $\sigma$ we associate the line segment (one-dimensional polytope) 
$$S_\sigma := \operatorname{conv}\left\{\frac{|B|}{n}\cdot\mathbbm{1}_A-\frac{|A|}{n}\cdot\mathbbm{1}_B,
\frac{|A|}{n}\cdot\mathbbm{1}_B-\frac{|B|}{n}\cdot\mathbbm{1}_A \right\} \subseteq V(\mathcal S) \subseteq \mathbb R^X$$
where $\mathbbm{1}_A:=\sum_{x\in A}\mathbbm{1}_x$, as well as a hyperplane 
$H_\sigma := (S_{\sigma})^{\perp}.$

Accordingly, the hyperplane arrangement and the zonotope associated to $\mathcal {S}$ are
$$
\AA(\mathcal {S}):=\{H_\sigma \mid \sigma\in \mathcal{S}\};
\quad\quad
Z(\mathcal{S}):= \sum_{\sigma\in \mathcal {S}} S_\sigma.
$$
\end{definition}

\begin{remark}\label{rm:az} Both the arrangement $\AA(\mathcal S)$ and the zonotope $Z(\mathcal S)$ are full-rank, resp.\ full-dimensional, inside the natural ``ambient space'' $V(\mathcal S)$. 
\end{remark}

\begin{remark}\label{rem:HJ}
Each of our hyperplanes $H_{\sigma}$ has the form of an $(A,B,\mu)$-hyperplane as described in \cite[Equation 8]{HJ}, for $\mu= p \vert B\vert $ and $k=pn$ where $p$ is any positive integer. However, such values of $\mu,k$ are excluded in \cite{HJ}. 
\end{remark}

\subsection{Matroids}\label{ss:mat} The  abstract combinatorial objects on which our enumerative considerations rest are {\em matroids}. Technicalities about matroids will only be needed in few proofs, therefore we only give a partial review of the definitions and the terminology and simply refer to Oxley's textbook \cite{Oxley}.

Let $E$ be a finite set. A matroid $\mathscr M$ on $E$ can be given by a collection of subsets of $E$ that contains the full set $E$ and which has the structure of a geometric lattice  when partially ordered by inclusion. The elements of this collection are the {\em flats} of the matroid, and the poset of all flats ordered by inclusion is called $\mathscr L(\mathscr M)$. Since geometric lattices are ranked posets, for every $A\subseteq E$ we can define a {\em rank} $\rho_{\mathscr M}(A)$ as the poset rank of the smallest element of $\mathscr L (\mathscr M)$ that contains $A$.

The {\em characteristic polynomial} of a matroid $\mathscr M$ on the ground set $E$ is
$$
\chi_{\mathscr M} (t):=\sum_{A\subseteq E} (-1)^{\vert A \vert} t^{\rho(E)-\rho(A)}.
$$

The matroid is called {\em simple} if the minimal flat is the empty set and every minimal nonempty flat is a singleton set. In this case, the structure of $\mathscr L(\mathscr M)$ determines the matroid fully.

\def\FM{\mathscr F}

\begin{example} Let $k$ be a positive integer and $E$ any $k$-element set. The set of all subsets of $E$ is the set of flats of a matroid on $E$ that we denote by $\FM(k)$ and call the free matroid on $k$ elements. The set of all subsets of $E$ of cardinality other than $k-1$ is also the set of flats of a matroid: we denote this by $\mathscr C(k)$ and call it the $k$-cycle matroid.
(The reader familiar with matroid theory will recognize $\mathscr C(k)$ as the uniform matroid $U_{k,k-1}$,  and notice that $\FM(k)\simeq U_{k,k}$.)
The characteristic polynomials of those matroids have the following form.
\begin{equation}
\chi_{\mathscr F(k)}(t) = (t-1)^k,\quad\quad
\chi_{\mathscr C(k)}(t) = (-1)^{k-1}\sum_{i=1}^{k-1}(1-t)^{i} 
\end{equation}

\end{example}

\begin{example}  To every arrangement $\mathscr A$ of hyperplanes in the sense of Section \ref{ss:AH} above is associated a matroid on the ground set $\mathscr A$ by declaring any $\mathcal K \subseteq \mathscr A$ to be a flat if and only if there is a linear subspace $X$ of $V$ such that $\mathcal K$ is the set of all hyperplanes containing $X$. In particular, there is a poset isomorphism
$$
\mathscr L (\mathscr A) \to \mathscr L (\mathscr M);\quad 
X \mapsto  \{H\in \mathscr A \mid X\subseteq H\}
$$
and, for every $\mathcal K \subseteq \mathscr A$, $\rho_{\mathscr M}(\mathcal K)=\operatorname{codim}\cap\mathcal K$.

From Zaslavsky's Theorem \ref{thm_z} and elementary computations we see that
\begin{equation}\label{eq:sum:contr}
f_{i}^{\mathscr A} = (-1)^i 
\sum_{\substack{
\mathcal K\in \mathscr L (\mathscr M) 
\\
\rho(\mathscr A) - \rho(\mathcal K)=i
}}
\chi_{\mathscr M / \mathcal K} (-1)
\end{equation}
where $\mathscr M / \mathcal K$ denotes the {\em contraction} of the flat $\mathcal K$ (see \cite[Section 3.1]{Oxley}). 
\end{example}

We conclude this brief overview with two matroid operations.

\begin{definition}
Let $E_1$, $E_2$ be two disjoint finite sets, and for $i=1,2$ let $\mathscr M_i$, be a matroid on the ground set $E_i$ with lattice of flats $\mathscr L_i$. The {\em direct sum} $\mathscr M_1 \oplus \mathscr M_2$ is the matroid on $E_1\cup E_2$ whose flats are precisely the unions of flats of $\mathscr M_1$ and $\mathscr M_2$. In particular, there is an isomorphism of posets $
\mathscr L(\mathscr M_1\oplus \mathscr M_2) \simeq \mathscr L_1\times \mathscr L_2$.
\end{definition}
The characteristic polynomial of a direct sum decomposes as a product.
\begin{equation}
\chi_{\mathscr M_1 \oplus \mathscr M_2}(t)=
\chi_{\mathscr M_1}(t)\chi_{\mathscr M_2}(t)
\end{equation}

\begin{definition}\label{def:PC}
Let $E_1$, $E_2$ be two finite sets such that $E_1\cap E_2=\{e\}$ for some $e$. For $i=1,2$ let $\mathscr M_i$, be a matroid on the ground set $E_i$ with lattice of flats $\mathscr L_i$ and rank function $\rho_i$. If $\rho_1(e)=\rho_2(e)$, the {\em parallel connection} of $\mathscr M_1$ and $\mathscr M_2$ along $e$ is the matroid $\mathscr M_1 \oplus_e \mathscr M_2$ on the ground set $E_1\cup E_2$ whose flats are precisely the subsets of the form $F_1\cup F_2$ for $(F_1,F_2)\in \mathscr L_1\times \mathscr L_2$ and either $e\in F_1\cap F_2$ or $ e\not\in F_1\cup F_2$.
\end{definition}

Characteristic polynomials of parallel connections behave naturally, e.g., as in the following sample result which we state for later reference.
\begin{remark}[{\cite[Theorem 5]{BdM}}]\label{rem:221} In the setting of Definition \ref{def:PC}, if $\rho_{\mathscr M_1}(\{e\})=\rho_{\mathscr M_2}(\{e\})\neq 0$ and $F$ is any flat of $\mathscr M_1\oplus_e\mathscr M_2$, then 
$$
\chi_{(\mathscr M_1 \oplus_e \mathscr M_2) / F}(t)=
\frac{\chi_{\mathscr M_1/(F\cap E_1)}(t)\chi_{\mathscr M_2/(F\cap E_2)}(t)}
{(t-1)^{\vert \{e\}\setminus F\vert}}
$$
\end{remark}

\section{Lipschitz polytopes of compatible systems of splits}

\begin{theorem} \label{Lipschitz} Let $(X,d)$ be a tree-like pseudometric space. Then,
$$LIP(X,d)= \sum_{\sigma\in \mathcal{S}}\alpha_{\sigma}S_\sigma$$
where $(\mathcal {S},\alpha)$ is the unique weighted system of compatible splits of $X$ such that  $d=d_\alpha$ (cf.\ Theorem \ref{tree}).

 \end{theorem}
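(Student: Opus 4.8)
The plan is to prove the two inclusions separately. The containment $\supseteq$ is already available: by Corollary~\ref{lemma_subset} (the iterated version of Lemma~\ref{lemma_subset1}) we immediately get $\LIP(X,d_\alpha) \supseteq \sum_{\sigma \in \mathcal S} \alpha_\sigma S_\sigma$. So the real work is the reverse containment $\LIP(X,d) \subseteq \sum_{\sigma\in\mathcal S}\alpha_\sigma S_\sigma$, and here the idea is to run an induction on $|\mathcal S|$, peeling off one split at a time and using Lemma~\ref{leaf} as the inductive step.

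First I would set up the induction. Since $\mathcal S$ is a compatible split system on a finite set, it corresponds (via Theorem~\ref{tree} and Remark~\ref{edges_tree}) to the edge set of an $X$-tree $(T,\phi)$, and compatible split systems have the property that they always contain a trivial split --- indeed any leaf edge of $T$, or more precisely a "cherry" or pendant structure, yields a split $\sigma = A \mid A^c$ where $A$ is the set of $X$-labels that $\phi$ sends to a single leaf-side, and crucially $d(i,j) = 0$ for all $i,j \in A$ because those points are all mapped to the same vertex of $T$. (If $\phi$ is injective this just says $A$ is a singleton, i.e.\ $\sigma$ is a trivial split in the usual sense; in the pseudometric generality we may have $|A|>1$.) I would verify carefully that such a split always exists: remove a leaf edge $e$ of $T$ corresponding to a split $\sigma$, note that $d = d' + \alpha_\sigma \delta_\sigma$ where $d'$ is the tree-like pseudometric of the smaller $X$-tree $T - e$ (collapsing the pendant vertex), and $d'$ is split-decomposable over $\mathcal S \setminus \{\sigma\}$.

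The inductive step is then exactly Lemma~\ref{leaf}: with $d = d' + \alpha_\sigma\delta_\sigma$ and $d'(i,j)=0$ on $A$ --- wait, one must check the hypothesis of Lemma~\ref{leaf} applies with $d'$ in the role of $d$, i.e.\ that $d'(i,j) = 0$ for $i,j \in A$; this holds because $d(i,j)=0$ on $A$ and $\delta_\sigma$ vanishes on $A \times A$, so $d'(i,j) = 0$ too. Lemma~\ref{leaf} then gives $\LIP(X, d' + \alpha_\sigma\delta_\sigma) = \LIP(X,d') + \alpha_\sigma S_\sigma$. By the induction hypothesis applied to $(\mathcal S\setminus\{\sigma\}, \alpha)$ we have $\LIP(X,d') = \sum_{\tau \in \mathcal S\setminus\{\sigma\}}\alpha_\tau S_\tau$, and adding $\alpha_\sigma S_\sigma$ finishes the step. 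The base case $\mathcal S = \emptyset$ is the statement that $\LIP(X,0)$ is the single point $\{0\}$ in the subspace where all coordinates coincide and sum to zero, which is immediate from the definition.

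The main obstacle I anticipate is the combinatorial bookkeeping in the induction: showing that a compatible split system (equivalently, an $X$-tree) always admits a split $\sigma = A\mid A^c$ with $d$ vanishing on $A$, such that removing $\sigma$ leaves a split system still realized by a genuine $X$-tree on the same label set $X$ with the pseudometric $d - \alpha_\sigma\delta_\sigma$. One has to be slightly careful about degenerate $X$-trees (vertices of degree $\le 2$ forced to be in the image of $\phi$, pendant edges whose removal creates a degree-2 vertex, and the pseudometric case where several labels sit on one vertex). Matching up "remove a leaf edge of $T$" with "remove a split from $\mathcal S$ and subtract the corresponding $\delta_\sigma$" cleanly is the only place where something could go wrong; the analytic content is entirely contained in Lemma~\ref{leaf}, which does all the heavy lifting.
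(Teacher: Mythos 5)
Your proposal is correct and follows essentially the same route as the paper: induction on $|\mathcal S|$, peeling off the split corresponding to a leaf edge of the associated $X$-tree, verifying that $d'$ vanishes on the leaf fiber $A=\phi^{-1}(v)$, and letting Lemma \ref{leaf} do the work in the inductive step. The bookkeeping worry you flag largely dissolves, since the induction hypothesis is applied to the weighted compatible subsystem $(\mathcal S\setminus\{\sigma\},\alpha)$ directly, which is tree-like by Theorem \ref{tree}, so no explicit reconstruction of a smaller $X$-tree is needed.
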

\begin{remark}\label{rem_mago} 
We thank an anonymous referee for pointing out to us that this theorem can be deduced from work of Koichi \cite{Koichi} and Murota's book on convex discrete analysis \cite{Murota}, as we explain in Proof A.
 For the benefit of the reader who might not be familiar with this apparatus, we also offer an elementary direct argument (Proof B below).
 \end{remark}

 \begin{proof}[Proof A] Following, e.g., Murota \cite{Murota}, we identify  any $C\subset \R^X$ via its ``indicator function'' $\delta_C:\R^X\to\{0,+\infty \}$, defined as
 \begin{equation}\label{delta}
\delta_{C}(x)=\begin{cases} 0 &x\in C  \\
\infty & \mbox{otherwise.} \end{cases} 
\end{equation}
\newcommand{\bolddot}[1]{#1^\bullet}
When $C$ is convex, \cite[Theorem 3.2 and (3.31)]{Murota} says that the ``conjugate''  (or Legendre-Fenchel transform) $\bolddot{\delta_C}$ of $\delta_C$ satisfies
$$\bolddot{\delta_C}(x)=\sup_{y\in C}x^Ty.$$
This expression allows for an explicit verification of the fact that for every Minkowski-sum decomposition $C=A+B$ of $C$ into convex sets $A$ and $B$ and for every $\alpha>0$  we have 
\begin{equation}\label{convsplit}
    \bolddot{\delta_C}(x)=
\bolddot{\delta_A}(x)+\bolddot{\delta_B}(x),\quad
\alpha\cdot\bolddot{\delta_C}(x)=\bolddot{\delta_{\alpha C}}(x)
\end{equation}

Moreover, following \cite{Koichi}, to any finite (pseudo)metric space $(X,d)$  one can associate the  finite vector configuration $K:=\{\mathbbm{1}_u-\mathbbm{1}_v\}_{u,v\in X}\subseteq \mathbb R^X$ and consider the  function $\overline{d}:\mathbb R^X\to \mathbb R$ defined as  the homogeneous convex closure \cite[\S 2.3]{Koichi} of the discrete function $K\to \mathbb R$, $(\mathbbm{1}_u-\mathbbm{1}_v)\mapsto d(u,v)$. An explicit expression for $\overline{d}$ is given in \cite[(3.1)]{Koichi}, and a direct check shows that in this case
\begin{equation}\label{dLIP}
\overline{d}(x)=\bolddot{\delta_{\LIP(X,d)}}.
\end{equation}
In particular, since for every split $\sigma$ of $X$ the function $\delta_\sigma$ from Equation \eqref{delta} is a pseudometric on $X$, we have
\begin{equation}\label{1split}
 \overline{\delta_{\sigma}}=\bolddot{\delta_{LIP(X,\delta_{\sigma})}}.
\end{equation}

When $(X,d)$ is a tree-like metric space with associated split system $(\mathcal{S},\alpha)$, \cite[Proposition 3.6]{Koichi} shows that
\begin{equation}\label{dsplit}
\overline{d} = \sum_{\sigma\in \mathcal S}\alpha_\sigma \overline{\delta_{\sigma}},
\end{equation}
where ${\delta_{\sigma}}$ is as in Equation \ref{dLIP} .
Summing up,  we can write
$$
\bolddot{\delta_{\LIP(X,d)}}
\stackrel{\eqref{dLIP}}{=}
\overline{d}
\stackrel{\eqref{dsplit}}{=} 
\sum_{\sigma\in \mathcal S}\alpha_\sigma \overline{\delta_{\sigma}}
\stackrel{\eqref{1split}}{=}
\sum_{\sigma\in \mathcal S}
\alpha_\sigma \bolddot{\delta_{\LIP(X,\delta_{\sigma})}}
\stackrel{\eqref{convsplit}}{=}
 \bolddot{\delta_{\left(\sum_{\sigma\in \mathcal S}\LIP(X,\alpha_\sigma\delta_{\sigma})\right)}}.$$

From the equality of the Legendre-Fenchel transforms of the indicator functions one then deduces equality of the polytopes, proving the claim.

 \end{proof}

\begin{proof}[Proof B] The proof is by induction on the cardinality of $\mathcal S$. If $\vert \mathcal S \vert =0$ there is nothing to prove.

Let then $\vert \mathcal S \vert >0$ and suppose that the theorem holds for all weighted systems of compatible splits of smaller cardinality. By Theorem \ref{tree}, to the space $(X,d)$ is associated a weighted $X$-tree $(T,\phi)$ in the sense of Definition \ref{df:tm}, and the corresponding tree metric can be expressed as a split metric with a split for every edge in the tree. The uniqueness part of Bandelt and Dress' decomposition theorem (\cite[Theorem 2]{Dress}) says that the associated split system must be $\mathcal S$. In particular, 
the tree $T$ has at least one edge, and thus at least one leaf vertex (i.e., a vertex incident to exactly one edge). 
Choose then such a leaf vertex, say $v$, and let $\sigma \in \mathcal S$ be the split corresponding to the unique edge incident to $v$. Then, 
$$\sigma = A \mid A^c \textrm{ with } A:=\phi^{-1}(v).$$
 Let $\mathcal S':=\mathcal S\setminus \{\sigma\}$ and let $(X,d')$ be the pseudometric space defined by $\mathcal S'$ and the appropriate restriction of $\alpha$. 
 Now notice that $d=d'+\alpha_\sigma \delta_{\sigma}$ and that, for all $i,j\in A$, we have $d'(i,j)=0$. The claim then follows by induction hypothesis applied to $\mathcal S'$ via the following identity.
$$\LIP(X,d'+\alpha_{\sigma} \delta_{\sigma} )= \LIP(X,d')+\alpha_{\sigma}S_\sigma.$$
The right-to-left containment is verified directly. In order to check the 
left-to-right containment we consider a point $x\in \LIP(X,d'+\alpha_{\sigma} \delta_{\sigma})$ and prove that it is contained in the right-hand side. The definition of the Lipschitz polytope implies immediately that, for all $i,j\in X$,
 $x_i-x_j \leq d'(i,j)+\alpha_\sigma$.

 Define $$\alpha:=max_{i\in A, j\in A^c}\{0, x_i-x_j -d'(i,j), x_j-x_i - d(i,j)\}.$$

If $\alpha=0$, then $x\in \LIP(X,d')$. Otherwise, choose $i_0,j_0$ such that $\alpha = x_{i_0}-x_{j_0} - d(i_0,j_0)$. Assume w.l.o.g.\ $i_0\in A$ and $j_0\in A^c$ (otherwise switch $A$ and $A^c$ in the following). 
Since now $0\leq \alpha \leq \alpha_\sigma$, it is enough to show that
$$y:=x - \alpha v_{\sigma}\in \LIP(X,d')$$
where $v_\sigma:=\frac{|A^c|}{n}\cdot\mathbbm{1}_A-\frac{|A|}{n}\cdot\mathbbm{1}_{A^c}$. This is proved by verifying, with a direct computation, that $y$ satisfies Equation \ref{df_lip}.
\end{proof}

\begin{theorem}\label{thm_formula} Let $(X,d)$ be a tree-like pseudometric space with associated system of compatible splits $\mathcal S$. Then the $f$-vector of the associated Lipschitz polytope is as follows.
$$f^{\LIP(X,d)}(x)=(-x)^{\rk(\cap\AA(\mathcal{S}))} M_{\AA(\mathcal{S})}\left(-\frac{1}{x},-1\right) + x^{\rk(\cap\AA(\mathcal S))+1}$$
If additionally $(X,d)$ is a metric space, then the $f$-vector of the associated fundamental polytope is
$$f^{P_d(X)}(x)=(-1)^{\rk(\cap\AA(\mathcal{S}))} M_{\AA(\mathcal{S})}(-x,-1)x+1.$$
\end{theorem}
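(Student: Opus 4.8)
The plan is to derive everything from Theorem \ref{Lipschitz} together with Zaslavsky's Theorem \ref{thm_z} and the zonotope--arrangement dictionary already assembled in the excerpt. By Theorem \ref{Lipschitz}, for a tree-like pseudometric space $(X,d)$ with associated positively weighted compatible split system $(\mathcal S,\alpha)$ we have $\LIP(X,d)=\sum_{\sigma\in\mathcal S}\alpha_\sigma S_\sigma$, which is exactly a rescaling of the zonotope $Z(\mathcal S)=\sum_{\sigma\in\mathcal S}S_\sigma$. By Remark \ref{rm_zonocomb} rescaling the generating segments does not change the face poset, so $\FF(\LIP(X,d))\cong\FF(Z(\mathcal S))$ as posets, and in particular $f^{\LIP(X,d)}(t)=f^{Z(\mathcal S)}(t)$.

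Next I would invoke Remark \ref{rm:az}, which records that $\AA(\mathcal S)$ and $Z(\mathcal S)$ are full-rank/full-dimensional in $V(\mathcal S)$, so that with $r:=\rk(\cap\AA(\mathcal S))=\dim Z(\mathcal S)$ one has
$$
f^{Z(\mathcal S)}(t)-t^{r+1}=t^{r}f^{\AA(\mathcal S)}\!\left(\tfrac1t\right).
$$
Now I apply Zaslavsky's Theorem \ref{thm_z}, $f^{\AA(\mathcal S)}(x)=(-1)^{r}M_{\AA(\mathcal S)}(-x,-1)$, with $x=1/t$. Substituting gives
$$
f^{\LIP(X,d)}(t)=t^{r}(-1)^{r}M_{\AA(\mathcal S)}\!\left(-\tfrac1t,-1\right)+t^{r+1}
=(-t)^{r}M_{\AA(\mathcal S)}\!\left(-\tfrac1t,-1\right)+t^{r+1},
$$
which is the first displayed formula (after renaming $t$ to $x$), using $t^{r}(-1)^{r}=(-t)^{r}$.

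For the second formula, assume in addition that $(X,d)$ is a metric space. Then Remark \ref{rem:lp} applies: $P_d(X)$ and $\LIP(X,d)$ are polar dual in $V_0(X)$, so with $m:=\dim P_d(X)=\dim\LIP(X,d)=r$ we have $f^{\LIP(X,d)}(t)=t^{m+1}f^{P_d(X)}(1/t)$, i.e. $f^{P_d(X)}(t)=t^{m+1}f^{\LIP(X,d)}(1/t)$. Plugging the first formula (with $r=m$) into the right-hand side and simplifying the powers of $t$ yields
$$
f^{P_d(X)}(t)=t^{m+1}\Big[(-\tfrac1t)^{m}M_{\AA(\mathcal S)}(-t,-1)+t^{-(m+1)}\Big]
=(-1)^{m}M_{\AA(\mathcal S)}(-t,-1)\,t+1,
$$
again after renaming $t$ to $x$. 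I would end by remarking that $V_0(X)$ and $V(X,d)$ coincide here because $d$ is a genuine metric, so the dimensions match as claimed.

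The only genuinely delicate point is the very first step: one must be sure that the zonotope $\sum_\sigma\alpha_\sigma S_\sigma$ has positive weights $\alpha_\sigma$ (guaranteed by the uniqueness clause of Theorem \ref{tree}, part (iii)), so that each segment $S_\sigma$ is a nondegenerate generator and Remark \ref{rm_zonocomb} genuinely applies; and that the arrangement $\AA(\mathcal S)$ used in Remark \ref{rm:az} is indeed the normal-hyperplane arrangement of these generators, which is exactly how $\AA(\mathcal S)$ was defined in Definition \ref{df_AS} via $H_\sigma=(S_\sigma)^\perp$. Everything else is bookkeeping: tracking which variable is substituted into which polynomial and keeping the signs $(-1)^r$ straight. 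I expect the $f_{-1}$ term (the empty-face contribution, responsible for the $+t^{r+1}$ on the zonotope side and the $+1$ on the fundamental-polytope side) to be the easiest place to make an off-by-one slip, so I would double-check it by evaluating both sides at a small explicit example such as a trivial split on a two- or three-point metric space.
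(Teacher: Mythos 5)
Your argument is correct and follows the paper's own proof essentially step for step: Theorem \ref{Lipschitz} plus the rescaling invariance of Remark \ref{rm_zonocomb} to identify $\FF(\LIP(X,d))$ with $\FF(Z(\mathcal S))$, then Remark \ref{rm:az} and Zaslavsky's Theorem \ref{thm_z} for the first formula, and the polar duality of Remark \ref{rem:lp} for the second. Your extra care about positivity of the weights and the identification $V(X,d)=V_0(X)$ in the metric case is sound, just making explicit what the paper leaves implicit.
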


\begin{proof}
Theorem \ref{Lipschitz} implies that 
$
\FF(\LIP(X,d)) \simeq \FF(Z(\mathcal S))
$, and thus with Remark \ref{rm:az} Theorem \ref{thm_z} 
we can compute
$$
f^{\LIP(X,d)}(x)
=
(-x)^{\rk(\cap\AA(\mathcal{S}))} M_{\AA(\mathcal{S})}(-\frac{1}{x},-1)
+ x^{\rk(\cap\AA(\mathcal S))+1}
$$
This proves the first of the claimed equalities. The second follows by  duality (Remark \ref{rem:lp}).
\end{proof}

\begin{corollary}\label{eq:f:contr}
For any tree-like metric space $(X,d)$ 
$$
f_i^{P_d(X)} = f_{i+1}^{\AA(\mathcal S)} = f_{|X|-1-i}^{\LIP(X,d)}   
$$
where $\mathcal S$ denotes the associated system of (compatible) splits and the index $i$ runs from $0$ to $\dim(P_d(X))=|X|-1$.

\end{corollary}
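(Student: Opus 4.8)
The approach is to obtain the corollary by composing three poset isomorphisms that are already in hand, so that no genuinely new content is needed beyond a careful count of face dimensions. The inputs are: Theorem \ref{Lipschitz}, which identifies $\LIP(X,d)$ with the zonotope $\sum_{\sigma}\alpha_\sigma S_\sigma$, combinatorially the same as $Z(\mathcal S)=\sum_\sigma S_\sigma$ by Remark \ref{rm_zonocomb} since the $\alpha_\sigma$ are positive (Theorem \ref{tree}(iii)); the zonotope--arrangement duality $\FF(\AA(\mathcal S))^{\op}\cong\FF(Z(\mathcal S))\setminus\{\emptyset\}$ of Remark \ref{rm:az}; and the polar duality $\FF(P_d(X))\cong\FF(\LIP(X,d))^{\op}$ of Remark \ref{rem:lp}, which is available precisely because $d$ is a genuine metric.

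Concretely I would argue as follows. Since $d$ is a metric, $V(X,d)=V_0(X)$, so Remark \ref{rm:az} gives $r:=\rk(\cap\AA(\mathcal S))=\dim Z(\mathcal S)=\dim\LIP(X,d)=\dim V_0(X)=|X|-1$; moreover the $e_{i,j}$ are well defined (as $d(i,j)>0$) and span $V_0(X)$, so $\dim P_d(X)=r$ as well. From Theorem \ref{Lipschitz} together with Remark \ref{rm_zonocomb} we get $f^{\LIP(X,d)}_k=f^{Z(\mathcal S)}_k$ for all $k$. Under the zonotope--arrangement duality the unique $0$-dimensional cone $\{0\}$ of the fan of $\AA(\mathcal S)$ matches $Z(\mathcal S)$ itself and the full-dimensional cones match the vertices, so a $j$-dimensional cone matches an $(r-j)$-dimensional face, giving $f^{\AA(\mathcal S)}_j=f^{Z(\mathcal S)}_{r-j}=f^{\LIP(X,d)}_{r-j}$. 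Polar duality makes an $i$-dimensional face of $P_d(X)$ correspond to an $(r-1-i)$-dimensional face of $\LIP(X,d)$, i.e.\ $f^{P_d(X)}_i=f^{\LIP(X,d)}_{r-1-i}$. Combining the last two identities yields
$$f^{P_d(X)}_i=f^{\LIP(X,d)}_{r-1-i}=f^{\AA(\mathcal S)}_{r-(r-1-i)}=f^{\AA(\mathcal S)}_{i+1},$$
and substituting $r=|X|-1$ gives the three quantities of the statement, with the extreme values of $i$ matching the empty face of $P_d(X)$ to the apex $\{0\}$ of the arrangement and to the whole zonotope.

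The main obstacle is nothing deep: it is exactly the dimension bookkeeping above, where one must keep in mind that the fan of $\AA(\mathcal S)$ contains the apex $\{0\}$ but no empty cell, whereas a polytope has an empty face but no interior apex, so the two dualities shift indices differently; one must also use that the metric (not merely pseudometric) hypothesis is what makes $P_d(X)$ full-dimensional in $V_0(X)$ and polar dual to $\LIP(X,d)$. As a cross-check, the identity can instead be verified at the level of $f$-polynomials using Theorem \ref{thm_formula}, Equation \eqref{eq_ZA}, and the relation in Remark \ref{rem:lp}.
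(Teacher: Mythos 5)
Your chain of identifications is exactly the route the paper has in mind: the corollary is stated without a separate proof, as an immediate consequence of Theorem \ref{Lipschitz} (plus the rescaling Remark \ref{rm_zonocomb}), the fan--zonotope duality of Remark \ref{rm:az}, and the polar duality of Remark \ref{rem:lp} (equivalently, one can read it off Theorem \ref{thm_formula}), and your dimension bookkeeping in each of those three steps is correct, including the observation that the metric hypothesis is what gives $V(X,d)=V_0(X)$ and the polarity statement.

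The problem is your final sentence: substituting $r=|X|-1$ into your (correct) chain $f_i^{P_d(X)}=f^{\LIP(X,d)}_{r-1-i}=f^{\AA(\mathcal S)}_{i+1}$ does \emph{not} ``give the three quantities of the statement''; it gives $f_i^{P_d(X)}=f^{\AA(\mathcal S)}_{i+1}=f^{\LIP(X,d)}_{|X|-2-i}$, valid for $-1\le i\le |X|-2$. The statement as printed ($f^{\LIP(X,d)}_{|X|-1-i}$, with $i$ running up to $|X|-1$) contains off-by-one slips that your own derivation exposes, and which conflict with the paper's Remark \ref{rem:lp} (where $f_i^{P_d(X)}=f^{\LIP(X,d)}_{m-1-i}$ with $m=\dim P_d(X)=|X|-1$). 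Concretely, for the three-point star metric (the $3$-caterpillar of Table \ref{table}) both $P_d(X)$ and $\LIP(X,d)$ are hexagons, so $f_0^{P_d(X)}=6$ while $f^{\LIP(X,d)}_{|X|-1-0}=f^{\LIP(X,d)}_{2}=1$; and at $i=|X|-1$ one has $f^{P_d(X)}_{|X|-1}=1$ but $f^{\AA(\mathcal S)}_{|X|}=0$, since the fan lives in the $(|X|-1)$-dimensional space $V_0(X)$. So instead of asserting agreement you should record the corrected indices and range; relatedly, your closing remark about the ``extreme values of $i$'' pertains to $i=-1$ (empty face of $P_d(X)$ $\leftrightarrow$ the cone $\{0\}$ $\leftrightarrow$ $\LIP(X,d)$ itself), which lies outside the printed range, while at the top of that range the claimed equalities fail.
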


\section{Computation of face numbers}

We turn to the problem of an effective computation of the $f$-vectors of fundamental polytopes. The main result of this section are explicit formulas for the face numbers of fundamental polytopes of tree-like metric spaces.

We will start with two easy cases and then offer a general tool allowing to compute the intersection lattice of the associated hyperplane arrangement. From there, we will study the structure of the matroid $\mathscr M(\mathcal S)$ in order to set up our formulas.

\begin{example}[Points in $\mathbb R^1$]\label{ex_points}
We can represent the metric space defined by any set of $n$ points in $\R^1$ by just taking its metric graph in a line, considering the associated set of splits and choosing the coefficients in the split-metric accordingly.  The arrangement corresponds to $(n-1)$ independent vectors in $n-1$-dimensional space, i.e., it is  isomorphic to the coordinate arrangement. The corresponding matroid is the uniform matroid $\mathcal{U}_{n-1}^{n-1}$ and, in particular, $f^{\AA}_i=2^{i}\binom{n-1}{i}$.
\end{example}
\begin{example}[The root polytope of type $A_{n-1}$]\label{ex_root} 
Let us consider a star graph, i.e., a tree with $n>2$ leaves and a unique internal vertex . If we assign each edge the length $\frac{1}{2}$, we define the structure of a metric space on the set $X$ of leaves of our star graph. 

The corresponding split system consists exactly of all the trivial splits, and  any two points are at distance $1$. Then, by definition, the fundamental polytope of this space is the convex hull of the vectors $e_{i,j}=\mathbb{1}_i-\mathbb{1}_j$, where $i\neq j \in [n]$.
This is also called the {\em root polytope of type $A_{n-1}$}, and its face numbers have been computed via algebraic-combinatorial considerations by Cellini and Marietti \cite[Proposition 4.3]{CelliniMarietti}. Of course, one could compute these numbers by computing the M\"obius function of the corresponding matroid, i.e., the uniform matroid $\mathcal{U}_{n}^{n-1}$.
\end{example}

\subsection{The intersection lattice of $\AA(\mathcal S)$} We will start by describing the intersection poset of $\AA(\mathcal S)$ by means of partitions. Work in this direction can be implicitly found in \cite{Koichi}, but for our purposes it will be convenient to give explicit statements and direct proofs (see Remark \ref{rem:Koichi} for details).

\begin{definition}\label{df_pi}
Let $(X,d)$ be a pseudometric space. The function $d$ induces a partition $\pi(d)$ of the set $X$ given as the set of equivalence classes of the equivalence relation in which $i$ and $ j$ are equivalent if and only if $d(i,j)=0$.
If the space $(X,d)$ arises from a positively weighted system of splits $(\mathcal S , \alpha)$, the partition $\pi(d)$ does not depend on $\alpha$ and we only write $\pi(\mathcal S)$.

 We have an order-reversing map of posets
$$
\pi: 2^{\mathcal S}\to \Pi_X; \quad\quad \mathcal S'\mapsto \pi(\mathcal S')
$$
where $2^{\mathcal S}$ denotes the poset of all subsets of $\mathcal S$ ordered by inclusion, and $\Pi_X$ is the poset of all partitions of $X$ ordered by refinement.

\end{definition}

\begin{theorem}\label{intersection}Let $(\mathcal {S},\alpha)$ be an arbitrary weighted system of compatible splits of a finite set $X$ and write $\pi:=\pi(\mathcal S)$. Then,
 $$\cap_{\sigma\in \mathcal{S}} H_{\sigma}= \big \langle e_{i,j} : i \textrm{ and }j \textrm{ in the same block of  } \pi(\mathcal S) 
 \big \rangle , $$
 where $e_{i,j}=(\mathbb{1}_i-\mathbb{1}_j)/d(i,j) $, see Definition \ref{df:poly}. 
\end{theorem}
\begin{proof}
The right-to-left inclusion holds by definition. We will prove the left-to-right inclusion by induction on the cardinality of the system of splits.

 If $\vert \mathcal S \vert = 1$ the claim is evident. Let then $m>0$, assume that the statement holds for any weighted system of up to $m$ compatible splits and consider a weighted system of splits $(\mathcal S,\alpha)$ with $\vert \mathcal S \vert = m+1$.

   By Theorem \ref{tree}, $(\mathcal S,\alpha)$ can be represented by an $X$-tree $(T,\phi)$ with at least one edge, hence with at least one leaf vertex $v$. In particular, with $A:=\phi^{-1}(v)$, we know that
   $
   \sigma:=A\vert A^c \in \mathcal S
   $
   and we can consider
   $$
   \mathcal S':= \mathcal S \setminus \{\sigma\},\quad\quad \alpha':= \alpha_{\vert \mathcal S'}, \quad\quad d':=d_{\alpha'}, \quad\quad \pi':=\pi(\mathcal S').
   $$

The $X$-tree $(T',\phi')$ associated to $(\mathcal S', \alpha')$ must have a vertex $v'$  with $\phi'(A)=v'$ (otherwise there would be $i,j\in A$ with $d_\alpha(i,j)\geq d_\alpha'(i,j)>0$). 

In a neighborhood of $v'$, the $X$-trees associated to $(\mathcal S',\alpha')$, resp.\ $(\mathcal S,\alpha)$,  differ as in Figure \ref{fig_v}. In particular, 
\begin{center}
($\ast$) $\quad$ $\pi = \{A,C,\pi_1,\ldots,\pi_k\}$; $\quad\quad$ $\pi'=\{A\sqcup C,\pi_1,\ldots,\pi_k \}$
\end{center}
where, as in the following, we think of a partition as a set of blocks. Moreover, given a partition $\pi$ of a set let $\sim_{\pi}$ denote the equivalence relation on the same set whose equivalence classes are the blocks of $\pi$.

\def\block{\pi}
\begin{figure}[h]
  \begin{tikzpicture}[scale=.8, every node/.style={scale=0.8}]
    \foreach \a in {0, 30, 90, 120, 150, 270, 300, 330}
      \draw (0, 0) -- (\a:2);
    \foreach \a in {45,50,55,60,65,70}
    \draw[fill=black] (\a: 2) circle(.1mm);
    \foreach \a in {0, 30, 90, 120, 150, 270, 300, 330}
      \draw (\a: 2) node {}; 
    \draw[fill=black] (0,0) circle(.8mm);
    \draw (270: 2.4) node {}; 
    \draw (300: 2.4) node {}; 
    \draw (330: 2.4) node {}; 
    \draw (0: 2.4) node {}; 
    \draw (30: 2.4) node {}; 
    \draw (90: 2.4) node {}; 
    \draw (120: 2.4) node {}; 
    \draw (150: 2.4) node {}; 
    \draw (210: .6) node {$A\sqcup C$};    
  \end{tikzpicture}$\quad\quad\quad\quad$
      \begin{tikzpicture}[scale=.8, every node/.style={scale=0.8}]
    \foreach \a in {0, 30, 90, 120, 150, 210, 270, 300, 330}
      \draw (0, 0) -- (\a:2);
    \foreach \a in {45,50,55,60,65,70}
    \draw[fill=black] (\a: 2) circle(.1mm);
          \draw[fill=black] (210: 2) circle(.8mm); 
    \foreach \a in {0, 30, 90, 120, 150, 210, 270, 300, 330}
      \draw (\a: 2) node {}; 
    \draw[fill=black] (0,0) circle(.8mm);
    \draw (270: 2.4) node {}; 
    \draw (300: 2.4) node {}; 
    \draw (330: 2.4) node {}; 
    \draw (0: 2.4) node {}; 
    \draw (30: 2.4) node {}; 
    \draw (90: 2.4) node {}; 
    \draw (120: 2.4) node {}; 
    \draw (150: 2.4) node {}; 
    \draw (185: .5) node {$C$};    
    \draw (210: 2.5) node {$A$};    
  \end{tikzpicture}

  \caption{The neighborhood of the vertex $v'$ in the $X$-tree $T'$ (left-hand side) and $T$ (right-hand side).}\label{fig_v}
  \end{figure}
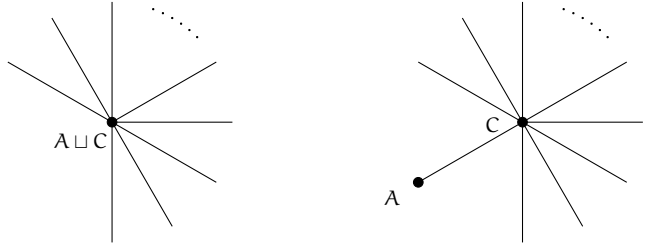

Let $v_{\sigma}:=\frac{|A|}{n}\cdot\mathbbm{1}_{A^c}-\frac{|A^c|}{n}\cdot\mathbbm{1}_{A}$, so that $(v_\sigma)^\perp = H_\sigma$. By induction hypothesis,
\begin{equation}\label{eq_Hvs}
\bigcap_{\tau\in \mathcal S} H_{\tau} = \bigcap_{\tau\in \mathcal S'} H_{\tau} \cap H_\sigma =\langle e_{i,j}: i\sim_{\pi'}j \rangle \cap (v_\sigma)^\perp.
\end{equation}

In view of $(\ast)$, the subspace $\langle e_{i,j}: i\sim_{\pi'}j \rangle$ decomposes as
$$
\bigoplus_{b\in \pi'}\left\langle e_{i,j}\mid i,j\in b \right \rangle =\left\langle e_{i,j}\mid i,j\in A\sqcup C \right \rangle\oplus
\underbrace{
\bigoplus_{b\in \pi'\setminus \{A\sqcup C\}}\left\langle e_{i,j}\mid i,j\in b \right \rangle}_{=:W}
$$

Since $\sigma$ does not split any block of $\pi'\setminus \{A\sqcup B\}$, we have $W\subseteq (v_\sigma)^{\perp}$. Therefore, the right-hand side of Equation \eqref{eq_Hvs} equals $\left(\left\langle e_{i,j}\mid i,j\in A\sqcup C \right \rangle \cap (v_\sigma)^\perp \right) \oplus W $.

On the other hand, 
$$\left\langle e_{i,j}\mid i,j\in A\sqcup C \right \rangle \cap (v_\sigma)^\perp
=\left\langle e_{i,j}\mid i,j\in A \right \rangle
\oplus
\left\langle e_{i,j}\mid i,j\in C \right \rangle$$

 Thus, we can rewrite the right-hand side of Equation \eqref{eq_Hvs} as
 $$
 \left\langle e_{i,j}\mid i,j\in A \right \rangle
\oplus
\left\langle e_{i,j}\mid i,j\in C \right \rangle
\oplus
W
 $$
 and in particular, recalling the block structure of $\pi$ from ($\ast$),
$$
\langle e_{i,j}\mid  i\sim_{\pi'} j \rangle \cap (v_\sigma)^\perp
= \langle e_{i,j}\mid  i\sim_{\pi} j \rangle 
$$
which, together with Equation \eqref{eq_Hvs}, concludes the proof.

\end{proof}

Recall that the posets of intersections of $\AA(\mathcal S)$ is the lattice of flats of the matroid $\mathscr M(\mathcal S)$. 

\begin{corollary}\label{cor:flats}
There is a poset isomorphism
$$
\mathscr{L} (\AA(\mathcal S)) \simeq \operatorname{im}\pi
$$
where the right-hand side is considered as an induced sub-poset of $\Pi_X^{op}$.

More precisely, if we identify the ground set of the matroid $\mathscr M(\mathcal S)$ with  $\mathcal S$ itself, we can write the closure operator of the matroid as
$$
\operatorname{cl (\mathcal S')} = \max_{\subseteq}\{ \mathcal S'' \subseteq \mathcal S \mid \pi(\mathcal S'')=\pi(\mathcal S'), \rho(\mathcal S') = \vert \pi(\mathcal S')\vert-1\}
$$
\end{corollary}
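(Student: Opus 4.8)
The plan is to read both assertions off Theorem \ref{intersection} with essentially no extra work. The key preliminary observation is that any subsystem of a compatible split system is again a compatible split system; hence Theorem \ref{intersection}, applied to an arbitrary $\mathcal S'\subseteq\mathcal S$ (with, say, the restriction of the weight), yields
$$
\bigcap_{\sigma\in\mathcal S'}H_\sigma \;=\; \big\langle\, e_{i,j} : i\sim_{\pi(\mathcal S')}j \,\big\rangle \;=:\; \psi\big(\pi(\mathcal S')\big),
$$
where for a partition $\rho\in\Pi_X$ I write $\psi(\rho):=\langle e_{i,j}: i\sim_\rho j\rangle$ (for $\mathcal S'=\emptyset$ both sides are the full ambient space $V_0(X)$). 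Since $\mathscr L(\AA(\mathcal S))$ is, by definition, the image of the order-preserving surjection $2^{\mathcal S}\to\{\text{subspaces}\}$, $\mathcal S'\mapsto\bigcap_{\sigma\in\mathcal S'}H_\sigma$, the identity above says exactly that this surjection factors as $2^{\mathcal S}\to\operatorname{im}\pi\to\mathscr L(\AA(\mathcal S))$, the first arrow being $\pi$ and the second being (the restriction of) $\psi$, which is therefore onto $\mathscr L(\AA(\mathcal S))$.

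To upgrade this to a poset isomorphism it suffices to show that $\psi$ is a poset embedding of $\Pi_X$ (with the refinement order of Definition \ref{df_pi}) into $\mathscr L(\AA(\mathcal S))$; being already surjective on $\operatorname{im}\pi$, its restriction there is then an isomorphism onto $\mathscr L(\AA(\mathcal S))$. For this I would use the description $\psi(\rho)=\bigoplus_{b\in\rho}\langle\mathbbm{1}_i-\mathbbm{1}_j : i,j\in b\rangle=\{x\in V_0(X): \sum_{i\in b}x_i=0 \text{ for every block } b\in\rho\}$. From it, refining $\rho$ discards some of the relations $i\sim_\rho j$ and hence shrinks $\psi(\rho)$, so $\psi$ is order-preserving for the reverse-inclusion order on $\mathscr L(\AA(\mathcal S))$; and $\psi$ is injective — indeed a poset embedding — because $\rho$ is recovered from $\psi(\rho)$ by the rule ``$i$ and $j$ lie in a common block of $\rho$ if and only if $e_{i,j}\in\psi(\rho)$'' (if $i,j$ lie in distinct blocks $b_1\ne b_2$ then $\sum_{k\in b_1}(e_{i,j})_k\ne 0$, so $e_{i,j}\notin\psi(\rho)$). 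This gives the first displayed isomorphism of the corollary.

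For the statement about the closure operator, identify the ground set of the matroid $\mathscr M(\mathcal S)$ with $\mathcal S$. Then $\operatorname{cl}(\mathcal S')$ is the flat $\{\sigma\in\mathcal S : H_\sigma\supseteq\bigcap_{\tau\in\mathcal S'}H_\tau\}$; using $H_\sigma\supseteq\bigcap_{\tau\in\mathcal S'}H_\tau \Leftrightarrow \bigcap_{\tau\in\mathcal S'\cup\{\sigma\}}H_\tau=\bigcap_{\tau\in\mathcal S'}H_\tau$ together with the first paragraph (and injectivity of $\psi$), this is the set of $\sigma$ with $\pi(\mathcal S'\cup\{\sigma\})=\pi(\mathcal S')$. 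Now $\pi(\mathcal S_1\cup\mathcal S_2)$ is the common refinement of $\pi(\mathcal S_1)$ and $\pi(\mathcal S_2)$ (two elements are equivalent for the union iff they are equivalent for each part); hence if $\pi(\mathcal S'')=\pi(\mathcal S')$ and $\sigma\in\mathcal S''$, then $\pi(\{\sigma\})$ is coarser than $\pi(\mathcal S'')=\pi(\mathcal S')$ (order-preservation of $\pi$), so $\pi(\mathcal S'\cup\{\sigma\})=\pi(\mathcal S')$ and $\sigma\in\operatorname{cl}(\mathcal S')$. Conversely $\operatorname{cl}(\mathcal S')$ is itself a flat whose associated subspace is $\bigcap_{\tau\in\mathcal S'}H_\tau$, so $\pi(\operatorname{cl}(\mathcal S'))=\pi(\mathcal S')$; thus every $\sigma\in\operatorname{cl}(\mathcal S')$ belongs to some $\mathcal S''$ with $\pi(\mathcal S'')=\pi(\mathcal S')$. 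We conclude $\operatorname{cl}(\mathcal S')=\bigcup\{\mathcal S''\subseteq\mathcal S : \pi(\mathcal S'')=\pi(\mathcal S')\}$, i.e.\ $\operatorname{cl}(\mathcal S')$ is the largest element of the fiber $\pi^{-1}(\pi(\mathcal S'))$ — which is the meaning of the displayed identity.

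I do not expect a genuine obstacle: the whole corollary is a translation of Theorem \ref{intersection} into the vocabulary of intersection posets and matroid flats. The two places deserving a line of care are matching the order conventions (reverse inclusion on $\mathscr L(\AA(\mathcal S))$ against the ``coarser $\le$ finer'' refinement order the paper uses on $\Pi_X$) and the injectivity of $\psi$, that is, the fact that a partition of $X$ is determined by the subspace of vectors that are zero-sum on each of its blocks.
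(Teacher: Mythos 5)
Your proof is correct and takes the route the paper intends: the corollary is stated without a separate proof, as an immediate consequence of Theorem \ref{intersection} applied to each (automatically compatible) subsystem $\mathcal S'\subseteq\mathcal S$, combined with the fact that a partition of $X$ is recovered from the subspace of block-wise zero-sum vectors. Your explicit check of the injectivity and order-compatibility of the map $\rho\mapsto\langle e_{i,j}:i\sim_\rho j\rangle$, and your reading of the displayed closure formula as the largest member of the fiber $\pi^{-1}(\pi(\mathcal S'))$, are precisely the details the paper leaves implicit.
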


\begin{example}[Full trees]\label{ex:full}
If $(X,d)$ is a ``full tree'' (in the sense of Definition \ref{df:tm}), then it can be represented by an $X$-tree where each vertex is labeled by exactly one point of $X$. Therefore it is apparent that $\pi$ is injective, and thus the poset $\mathcal L (\AA (\mathcal S))$ is boolean. 
With equation \eqref{eq:sum:contr} and Corollary \ref{eq:f:contr} we immediately
$$
f_i^{P_d(X)} = 2^i{n-1 \choose i},
$$
generalizing, as expected, Example \ref{ex_points}. 
\end{example}

\begin{remark}\label{rem:Koichi} Koichi \cite{Koichi} considers lattices of flats $\mathscr L$ of matroids of linear dependencies of centrally symmetric vector configurations and characterizes the families $\mathcal C \subseteq \mathscr L\setminus (\max\mathscr L)$ 
such that the subposet $\{\wedge C\mid C\subseteq \mathcal C\}$ of $\mathscr L$ (i.e., generated by meets of subfamilies of $\mathcal C$) is anti-isomorphic to a geometric lattice \cite[Theorem 4.1]{Koichi}. Our case corresponds to point configurations of "type $\Omega$" in \cite{Koichi}, for which \cite[Theorem 4.6]{Koichi} establishes that the set of all flats consinsting of the vectors contained in one of the hyperplanes from Definition \ref{df_AS} satisfies indeed this condition. Moreover, in \cite[\S 4.3.1]{Koichi} it is hinted at a description of $\mathscr L(\AA(\mathcal S))$ in terms of partitions of $X$. We thought it helpful to give explicit statements and a direct proof in this paper.
\end{remark}

\subsection{A graph-theoretic description of $\mathscr M (\mathcal S)$}

In order to give explicit formulas for the face numbers of the fundamental polytope of a tree-like metric space $(X,d)$ we study the structure of the matroid $\mathscr M(\mathcal S)$ in terms of the associated $X$-tree.
First, note that since the ground set of $\mathscr M(\mathcal S)$ is the set of splits, via Theorem \ref{tree} we can naturally think of $\mathscr M(\mathcal S)$ as having the set of edges of the $X$-tree as a ground set.

\medskip

In this section let then $X$ be a finite set, and let $T$ denote any $X$-tree (Definition \ref{df:tm}). For simplicity we identify a labeled vertex with its label, thus regarding $X$ as a subset of $V(T)$. Given any $F\subseteq E(T)$, the associated {\em edge-induced subgraph} is $T[F]=(V(T), F)$, i.e., the graph consisting of all vertices of $T$ but only the edges in $F$. 

Every edge $e\in E(T)$ defines a split $\mathcal S_e$ of $X$ by partitioning $X$ into two parts according to which connected component of $T[E\setminus \{e\}]$ they are in. Let $\mathcal S(F):=\{\mathcal S_e \mid e\in F\}$ denote the system of splits associated to an edge set $F$.

\begin{definition}
Call a subset $F\subseteq E(T)$ of edges of an $X$-tree {\em flat} if the induced subgraph $T[E\setminus F]$ has no unlabeled vertices of degree $1$. 
\end{definition}

\begin{proposition}\label{prop:S-T} Let $T$ be an $X$-tree and let $\mathcal S$ denote the associated system of splits of $X$. Then, a set $F\subseteq E$ is flat if and only if $\mathcal S(F)$ is closed in $\mathscr M(S)$. Moreover, the rank of $\mathcal S(F)$ in $\mathcal M(T)$ is one less than the number of connected components of $T[E\setminus F]$ that contain at least an $X$-labeled vertex. 
\end{proposition}

\begin{proof}

The partition $\pi(\mathcal S(F))$ is the set of equivalence classes of the relation defined on $X$ by $x\simeq_F y$ if $x,y$ in the same connected component of $T[E\setminus F]$. Thus,  $\mathcal S(F)$ is not closed if and only if there is a split $\sigma$ in $\mathcal S \setminus \mathcal S(F)$ with $\pi(\mathcal S(F)\cup \{\sigma\})=\pi(\mathcal S(F))$. Equivalently, there is an edge $e\not\in F$ whose removal from $T[E\setminus F]$ does not increase the number of connected components containing $X$-vertices: this can only happen if  $T[E\setminus F]$ contains a non-$X$-labeled leaf.
\end{proof}

\begin{definition}
For any given $X$-tree $T$, let $\mathscr M(T)$ denote the unique simple matroid on the ground set $E(T)$ where a set is closed if and only if it is flat in $T$.
\end{definition}

\begin{corollary}
For every tree-like metric space with associated split-system $\mathcal S$ and tree $T$, the matroids $\mathscr M(\mathcal S)$ and $\mathscr M (T)$ are isomorphic.
\end{corollary}

In the following we will study how the structure of the tree $T$ leads to a decomposition of the matroid $\mathscr M(T)$. The decomposition is in terms of {\em parallel connections}. Recall that

\def\CT{\widehat{T}}

\begin{definition}\label{def:TC}
Given any tree $T$ let  $\CT$ be the tree obtained by removing al vertices of degree $1$ from $T$. We call $\CT$ the ``core'' of $T$.
  
For every leaf $c$ of $\CT$ let $\ell(c)$ be the set of leaves of $T$ adjacent to $c$. Let $c'$ be the vertex of $\CT$ adjacent to $c$. Then set $T_{[c]}$ to be the $(\ell(c)\cup\{c'\})$-tree obtained from the neighborhood of $c$ in $T$ by, if necessary, labeling $c'$. Moreover, let $T^{[c]}$ denote the $((X\setminus \ell(c))\cup \{c\})$-tree obtained by pruning $\ell(c)$ from $T$ and, if necessary, labeling $c$.
\end{definition}

\begin{lemma}\label{lemma:pruning}
Let $T$ be an $X$-tree, let $c$ be a leaf of $\CT$, $e(c)$ the edge connecting $c$ to $\CT$. 
Then,
$$
\mathscr M (T) = 
\mathscr M(T^{[c]})\oplus_{e(c)} \mathscr M(T_{[c]}). 
$$
\end{lemma}
\begin{proof} We check the definition via the set of flats. Fix $F\subseteq E$ and let $F':=F\cap E(T^{[c]})$, $F'':=F\cap T_{[c]}$. Then, $F$ is flat in $T$ if and only if $F'$ and $F''$ are both flat in the respective graphs (where unlabeled vertices have the same neighborhood as in $T$).
 \end{proof}

Recall that the {\em neighborhood} $N(v)$ of a vertex $v$ in a (simple) graph is the set of edges incident to $v$. The {\em degree} of $v$ is then the number $\deg(v):=\vert N(v)\vert$ of such edges. In the following, given any vertex $v$ of a tree $T$ and any $A\subseteq E(T)$ we will write $\deg_A(v)$ for the degree of $v$ in the graph $T[A]$. More generally, for any given set $W\subseteq V(T)$ of vertices, we write $\deg_A(W):=\sum_{v\in W}\deg_A(v)$. If no precision is necessary, we will write $\deg$ for $\deg_E$, the degree in the full graph $T$.

\begin{remark}\label{rem:CF}
If $c\in X$, then $\mathscr M(T_{[c]})\simeq \FM (\operatorname{deg}(c))$. Otherwise, $\mathscr M(T_{[c]})\simeq \mathscr C(\operatorname{deg}(c))$.
\end{remark}

\begin{theorem}
Let $T$ be an $X$-tree. Fix an enumeration $c_1,\ldots, c_m$ of the vertices of $\CT$ such that $\CT_i$, the graph induced on the vertices $c_1,\ldots, c_i$, is connected for all $i=1,\ldots,m$. Let $e_i$ denote the edge $\{c_i,c_{i+1}\}$.
Then,
$$
\mathscr M(T) = 
\mathscr R(\operatorname{deg}(c_1))
\oplus_{e_1}
\cdots
\oplus_{e_{m-1}}
\mathscr R(\operatorname{deg}(c_m))
$$
where $\mathscr R(\operatorname{deg}(c))$ is a matroid on the ground set $N(c)$ and
 equals  $\mathscr F(\operatorname{deg}(c))$ if $c\in X$ and equals $\mathscr C(\operatorname{deg}(c))$ otherwise.
\end{theorem}

\begin{proof}
A recursive application of Lemma \ref{lemma:pruning} gives
$$
\mathscr M(T) = \mathscr M(T^{[c_m]\cdots[c_2]}) \oplus_{e(c_2)} \mathscr M(T^{[c_m]\cdots[c_3]}_{[c_2]}) 
\ldots
\oplus_{e(c_{m-1})} 
\mathscr M(T_{[c_m]}).
$$
From this expansion the claim follows by Remark \ref{rem:CF}, noticing that $T^{[c_m]\cdots[c_{i}]}_{[c_{i-1}]} = T_{[c_{i-1}]}$ for all $i=2,\ldots,m$, and that $T^{[c_m]\cdots[c_2]}=T_{[c_1]}$.
\end{proof}

\def\deg{\operatorname{deg}}

\begin{definition}
Let $T$ be an $X$-tree as above.
We denote by $L$, resp.\ $U$, the set of labeled, resp.\ unlabeled vertices of the core $\CT$ (hence $L=V(\CT)\cap X$ and $V(\CT)=L\uplus U$).
 Moreover, given any $A\subseteq E$, let $\epsilon(A):= \vert A\cap E(\CT)\vert$ be the number of edges of $\CT$ contained in $A$, and write $\varphi (A)$ for the number of unlabeled vertices that are isolated in $T[A^c]$ (hence $\varphi(A)=\vert\{c\in U \mid \deg_A(c)=\deg(c)\}\vert$.
\end{definition}

\begin{lemma}\label{lem:RP}
The rank in $\mathscr M(T)$ of any $F\subseteq E$ is
\begin{equation}\label{eq:rank}
\deg_F(V(\CT)) - \varphi(F) -\epsilon(F)
\end{equation}
For any flat $F\subseteq E$, the characteristic polynomial of the contraction $\mathscr M(T)/F$ can be expressed as follows (where we write $G:=E\setminus F$).

\begin{equation}
(-1)^{\deg_{G}(V(\CT))-\epsilon(G)}
(1-t)^{\deg_{G}(L)-\epsilon(G)}
\prod_{\substack{c\in U\\ \deg_{G}(c)>0}}
\frac{(1-t)^{\deg_{G}(c)}-(1-t)}{t}
\end{equation}
\end{lemma}
\begin{proof}
The rank of $F$ is the sum of the ranks of $F\cap N(c)$ in $\mathscr R(c)$ to which one has to subtract the number of $e_i$ contained in $F$. The rank of $F\cap N(c)$ is its cardinality ($\deg_F(c)$), except in the case where $N(c)\subseteq F$ ($\deg_F(c)=\deg_T(c)$) and $c$ is unlabeled, where one has to substract one. The first claim follows.

For the second claim, repeated application of Remark \ref{rem:221}yields
$$
\chi_{\mathscr M(T)/F}(t)=
\frac{
\prod_{i=1}^m \chi_{\mathscr R(c_i)/(F\cap N(c_i))}(t)
}{
(t-1)^{\vert E(\CT)\setminus F\vert}
}
$$
For any $I\subseteq [k]$ with $i=\vert I \vert$ we have $\mathscr F(k)/I = \mathscr F(k-i)$ and $\mathscr C(k)/I = \mathscr C(k-i)$. Moreover, notice that $ \chi_{\mathscr C(0)}(t)=1$. With this, a direct computation proves the second claim.
\end{proof}

\begin{corollary}\label{cor:CP} 
Let $T$ be an $X$-tree. Then the characteristic polynomial of $\mathscr M(T)$ is as follows.
$$
\chi_{\mathscr M(T)}(t)=
(-1)^{\vert V(T)\vert-1}
(1-t)^{\deg(L)-\vert E(\CT) \vert }
\prod_{c\in U} 
\frac{(1-t)^{\deg(c)}-(1-t)}{t}
$$
The number of facets of the fundamental polytope $P_d$ (i.e., vertices of the Lipschitz polytope) is thus the following.
$$
2^{\deg(L) - \vert E(\CT)\vert }
\prod_{c\in U} (2^{\deg(c)}-2)
$$
\end{corollary}
\begin{proof} The first claim is immediate from Lemma \ref{lem:RP}, noticing that in an $X$-tree there are no unlabeled vertices of degree less than $3$. The second claim follows using Corollary \ref{eq:f:contr} and Equation \eqref{eq:f:contr} with $i=\vert X \vert - 1$.
\end{proof}

\begin{definition}
For a given $X$-tree $T$ and fixed $k\in \mathbb N^{V(\CT)}$, $i,\epsilon\in \mathbb N$, 
let  $\Gamma_T(k,\epsilon,i)$ denote the number of subgraphs $T[G]$ of $T$ with no unlabeled leaves and such that

\begin{itemize}
\item exactly $i+1$ connected components of $T[G]$ contain labeled vertices
\item $\deg_G(c)=k_c$ for all $c\in V(\CT)$.
 \item $G$ contains $\epsilon$ ''core edges'' ($\epsilon(G) = \epsilon$).
\end{itemize}
\end{definition}

\begin{remark}
Notice that only $G=E(T)$ satisfies the conditions when $i=0$; i.e., $\Gamma_T(k,\epsilon,0)=1$ if $k_c=\deg_T(c)$ for all $c$ and $\epsilon=\vert E(\CT)\vert$, and $0$ otherwise.
\end{remark}

\begin{theorem}\label{theo:formel}
Let $(X,d)$ be a tree-like metric space with associated $X$-tree $T$. Then, for all $i\geq 0$,
$$
f_i^{\LIP(X,d)} = f^{P_d}_{\vert X \vert - 1 - i} = 
\sum_{
(k,\epsilon)\in \mathbb N^{V(\CT) +1}
}
\Gamma_T(k,\epsilon,i)
2^{\sum_{c\in L}k_c -\epsilon }
\prod_{
\substack{c\in U\\ k_c>0}} (2^{k_c}-2)
$$
\end{theorem}

\begin{proof}
This formula is a direct consequence of Corollary \ref{eq:f:contr}, via Formula \eqref{eq:sum:contr} and the explicit expression of Lemma \ref{lem:RP}.
\end{proof}

\begin{example}\label{ss_Catg} Let $n\in \mathbb N$, $n\geq 3$, and let us consider any tree metric $(X,\gamma)$ whose underlying  $X$-tree is an $n$-caterpillar graph (see Figure \ref{fig_cg}) with every leaf labelled by exactly one of the $n$ points of $X$, and no internal vertices labelled. 

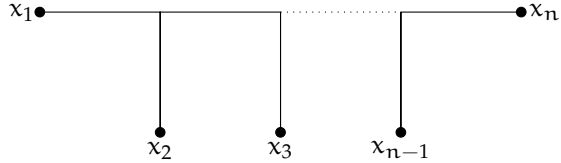
\begin{figure}[h]
\centering
\begin{tikzpicture}[scale=.8]
\draw   (-2,0)-- (0,0) -- (0,-2) -- (0,0) -- (2,0) -- (2,-2) -- (2,0);
\draw (4,0)-- (4,-2) -- (4,0)-- (6,0);
\draw[dotted](2,0) -- (4,0);

\draw[fill=black] (-2,0) circle(.8mm);
\draw[fill=black] (0,-2) circle(.8mm);
\draw[fill=black] (2,-2) circle(.8mm);
\draw[fill=black] (4,-2) circle(.8mm);
\draw[fill=black] (6,0) circle(.8mm);

\draw (-2.3,0) node {$x_1$};
\draw (0,-2.3) node {$x_2$};
\draw (2,-2.3) node {$x_3$};
\draw (4,-2.3) node {$x_{n-1}$};
\draw (6.4,0) node {$x_{n}$};
    
\end{tikzpicture}

\caption{The $n$-caterpillar graph}\label{fig_cg}
\end{figure}

Then, $\CT$ is an $(n-2)$-point path and our formula immediately computes the number of vertices of the associated Lipschitz-polytope as
$$
f_0^{\LIP(X,\gamma)}=
2\cdot 3^{n-2}.
$$

We can also compute the number of edges: notice that for $i=1$, the only subgraphs that are counted by $\Gamma_T(k,\epsilon,i)$ are of the form $T[E\setminus\{e\}]$ for some $e\in E$. 

If $e\not\in E(\CT)$ then $\epsilon=\vert E(\CT)\vert = n-3$ and the subgraph has exactly one unlabeled vertex of degree $2$, while all others have degree $3$. This results in a contribution to the number of edges in the amount of $2^{3-n}\cdot 2\cdot 6^{n-3}$ for each of these $n$ cases. If $e\in E(\CT)$, then $\epsilon=n-4$ and there are exactly two unlabeled vertices of degree two, while all others have degree $3$. This gives a contribution of $2^{4-n}\cdot 2^{2}\cdot 6^{n-4}$ in each of these $n-3$ cases. 

In total, the number of edges of the Lipschitz-polytope of the $n$-caterpillar graph then equals 
$$
f_1^{\LIP(X,\gamma)}=n2\cdot 3^{n-3}+ (n-3)2^2\cdot 3^{n-4}\textrm{ for all }n\geq 3.$$
These formulas do in fact correctly predict some of the numbers in Table \ref{table}, which shows the $f$-polynomials of the fundamental polytopes of these metric spaces for the first few values of $n$ as computed with SAGE via Corollary \ref{cor:flats}. It took around 10 seconds to compute the $f$-polynomial of the biggest example, the $6$-caterpillar graph, on the sage cloud (run on a free server).

\end{example}

\begin{table}[h]

\begin{tabular}{ l|l }

  Metric space  &  $f$--polynomial of $P_d(X)$\\
  \hline
3-caterpillar & $t^3+6t^2+6t+1$  \rule{0pt}{2.6ex}\\
4-caterpillar    & $t^4+12t^3+28t^2+18t+1$ \\
5-caterpillar    & $t^5+20t^4+80t^3+114t^2+54t+1$   \\
6-caterpillar     & $t^6+30t^5+180t^4+422t^3+432t^2+162t+1$   \\
\end{tabular}

\caption {} 
\label{table} 
\end{table}

\section{A characterization of simpliciality}

We turn to characterizing the tree-like metric spaces whose fundamental polytope is simplicial. Recall that a polytope is called simplicial if each of its faces is (combinatorially equivalent to) a simplex \cite[Section 2.5]{Ziegler}. Equivalently, a polytope $P$ is simplicial if, for every face $F\in \FF(P)$, the lower interval $\FF(P)_{\leq F}$ is a boolean poset.  Analogously, an arrangement $\AA$ of hyperplanes in a real vector space is called simplicial if each of the cones of the fan determined by $\AA$ (see \S \ref{ss:AH}) is a cone over a simplex - or, equivalently, if $\FF(\AA)_{\leq F}$ is a boolean poset for each $F\in \FF(\AA)$.

Consider a tree-like metric space $(X,d)$ with associated split system $\AA(\mathcal S)$. We know that the posets of faces of the fundamental polytope and of the associated hyperplane arrangement are isomorphic: $\FF(P_d(X)) \simeq \FF(\AA(\mathcal S))$. Therefore, our characterization of simpliciality for fundamental polytopes of metric trees will build upon the following characterization of simpliciality of arrangements of hyperplanes.

\begin{theorem}[Cuntz-Geis {\cite[Corollary 2.4]{CG}}]
Let $\mathscr A$ be an arrangement of hyperplanes in $\mathbb R^r$. Suppose that $\cap \mathscr {A}$ is a single point, so that the matroid $\mathscr{M}(\mathscr{A})$ has rank $r$.

The arrangement $\mathscr{A}$ is simplicial if and only if the characteristic polynomial satisfies
\begin{equation}\label{eq:CG}
r\,\chi_{\mathscr{M}(\mathscr{A})}(-1)+2\sum_{H\in \mathscr {A}} \chi_{\mathscr{M}(\mathscr A)/H}(-1)=0.
\end{equation}

\end{theorem}

\begin{lemma}\label{lem:q}
Let $T$ be an $X$-tree and $\mathscr M(T)$ the associated matroid. Let $e$ be an edge of $T$ and let $L(e)$, resp.\ $U(e)$, be the set of labeled, resp.\ unlabeled vertices of the edge $e$. Then
$$
\frac{\chi_{\mathscr M(T)/e}(-1)}{\chi_{\mathscr M(T)}(-1)}= -  2^{\vert U(e) \vert -1} \prod_{v\in U(e)}\frac{2^{\deg(v)-1}-2}{2^{\deg(v)}-2}
$$
\end{lemma}
\begin{proof}
The negative sign appears because the sign of the characteristic polynomial evaluated at $-1$ is the parity of the rank of the matroid, and contracting by a non-loop element decreases the matroid's rank by one. The formula for the absolute value follows from a case-by-case comparison using Lemma \ref{lem:RP}
\end{proof}

\begin{theorem}\label{theo:simp}
Let $(X,d)$ be a tree-like metric space. The fundamental polytope $P_d(X)$ is simplicial if and only if the space is a full tree. (In this case the face numbers are computed in Example \ref{ex:full}.)
\end{theorem}
\begin{proof}
With \Cref{lem:q}, and using the fact that $\mathscr{M}(\AA(\mathcal S))$ has rank $\vert X \vert -1$, Cuntz and Geis' condition (Equation \eqref{eq:CG}) is equivalent to 
\begin{equation}\label{eq:mod}
\left(
\vert X \vert - 1 - 2\sum_{e\in E(T)}q(e)
\right)
=0.
\end{equation}
where we write $q(e)$ for the absolute value of the quantity at the right-hand side of the claim in \Cref{lem:q}. Now notice that 
$$
q(e)=\frac{1}{2}\quad\textrm{ if }U(e)=\emptyset,
\quad\quad\quad
q(e)>\frac{1}{2}\quad\textrm{ otherwise, }
$$
since the degree of an unlabeled vertex in an $X$-tree is always at least $3$.
Thus the left-hand side of \Cref{eq:mod} is 

$$
\left(
\vert X \vert - 1 - 2\sum_{e\in E(T)}q(e)
\right)
\leq \vert X \vert - 1 -\vert E(T)\vert \leq 0
$$
where the first inequality is an equality if and only if  
$U(e)=\emptyset$ for all $e\in E(T)$. This means that no labeled vertices exist, hence $\vert X \vert = V(T)$ and the second inequality is also an equality because every finite tree has one more vertex than it has edges. The claim follows.
\end{proof}

\begin{definition}[Gordon and Petrov {\cite{Petrov}}]\label{df:generic}
A metric space $(X,d)$ is called {\em generic} if the triangle inequality is always strict (i.e., $d(x,z)<d(x,y)+d(y,z)$ for all pairwise distinct $x,y,z\in X$)  and the fundamental polytope $P_d(X)$ is simplicial.
\end{definition}

\begin{corollary}\label{cor:generic}
No tree-like metric on more than $2$ points is generic in the sense of Definition \ref{df:generic}.
\end{corollary}
\begin{proof}
Gordon and Petrov define a generic metric to be one that is always strict and for which the fundamental polytope is simplicial. We know by \Cref{theo:simp} that a tree metric for which the polytope is simplicial must be a full tree (i.e., without unlabeled vertices). But every full tree on $3$ or more vertices contains two adjacent edges, and the triangle inequality applied to the tree vertices incident to those two edges is in fact an equality. 
\end{proof}

 \bibliography{main2}

\begin{thebibliography}{10}

\bibitem{Dress}
H.-J. Bandelt and A.~W.~M. Dress.
\newblock A canonical decomposition theory for metrics on a finite set.
\newblock {\em Adv. Math.}, 92(1):47--105, 1992.

\bibitem{BdM}
J.~Bonin and A.~de~Mier.
\newblock Tutte polynomials of generalized parallel connections.
\newblock {\em Adv. Appl. Math.}, 32(1-2):31--43, 2004.

\bibitem{Buneman}
P.~Buneman.
\newblock A note on the metric properties of trees.
\newblock {\em J. Combinatorial Theory Ser. B}, 17:48--50, 1974.

\bibitem{CelliniMarietti}
P.~Cellini and M.~Marietti.
\newblock Root polytopes and {A}belian ideals.
\newblock {\em J. Algebraic Combin.}, 39(3):607--645, 2014.

\bibitem{CG}
M.~Cuntz and D.~Geis.
\newblock Combinatorial simpliciality of arrangements of hyperplanes.
\newblock {\em Beitr. Algebra Geom.}, 56(2):439--458, 2015.

\bibitem{DeStu}
M.~Develin and B.~Sturmfels.
\newblock Tropical convexity.
\newblock {\em Doc. Math.}, 9:1--27, 2004.

\bibitem{Deza}
M.~M. Deza and M.~Laurent.
\newblock {\em Geometry of Cuts and Metrics}.
\newblock Springer Publishing Company, Incorporated, 1st edition, 2009.

\bibitem{Petrov}
J.~Gordon and F.~Petrov.
\newblock Combinatorics of the {L}ipschitz polytope.
\newblock {\em Arnold Math. J.}, 3(2):205--218, 2017.

\bibitem{HJ}
S.~Herrmann and M.~Joswig.
\newblock Splitting polytopes.
\newblock {\em M\"unster J. Math.}, 1:109--141, 2008.

\bibitem{Michalek}
A.~Higashitani, K.~Jochemko, and M.~Micha\l{}ek.
\newblock Arithmetic aspects of symmetric edge polytopes.
\newblock {\em ArXiv e-prints}, 2018.

\bibitem{Hirai}
H.~Hirai.
\newblock A geometric study of the split decomposition.
\newblock {\em Discrete Comput. Geom.}, 36(2):331--361, 2006.

\bibitem{JJZ}
Filip~D. {Jevti{\'c}}, Marija {Jeli{\'c}}, and Rade~T. {{\v{Z}}ivaljevi{\'c}}.
\newblock {Cyclohedron and Kantorovich-Rubinstein polytopes}.
\newblock {\em arXiv e-prints}, Mar 2017.

\bibitem{JTZ}
Filip~D. {Jevti{\'c}}, Marinko {Timotijevi{\'c}}, and Rade~T.
  {{\v{Z}}ivaljevi{\'c}}.
\newblock {Polytopal Bier spheres and Kantorovich-Rubinstein polytopes of
  weighted cycles}.
\newblock {\em arXiv e-prints}, Dec 2018.

\bibitem{JL}
M.~Joswig and Georg L.
\newblock Weighted digraphs and tropical cones.
\newblock {\em Linear Algebra Appl.}, 501:304--343, 2016.

\bibitem{JS}
M.~Joswig and B.~Schr\"oter.
\newblock Matroids from hypersimplex splits.
\newblock {\em J. Combin. Theory Ser. A}, 151:254--284, 2017.

\bibitem{Koichi}
S.~Koichi.
\newblock The {B}uneman index via polyhedral split decomposition.
\newblock {\em Adv. in Appl. Math.}, 60:1--24, 2014.

\bibitem{Vershik2}
J.~Melleray, F.~Petrov, and A.~Vershik.
\newblock {Linearly rigid metric spaces and the embedding problem}.
\newblock {\em Fund. Math. 199,no.2 177-194}, 2008.

\bibitem{Murota}
K.~Murota.
\newblock {\em Discrete Convex Analysis: Monographs on Discrete Mathematics and
  Applications 10}.
\newblock Society for Industrial and Applied Mathematics, Philadelphia, PA,
  USA, 2003.

\bibitem{Giapponesi}
H.~Oshugi and T.~Hibi.
\newblock Normal polytopes arising from finite graphs.
\newblock {\em J. Algebra}, 207(2):409--426, 1998.

\bibitem{Oxley}
J.~Oxley.
\newblock {\em Matroid theory}, volume~21 of {\em Oxford Graduate Texts in
  Mathematics}.
\newblock Oxford University Press, Oxford, second edition, 2011.

\bibitem{Steel}
C.~Semple and M.~Steel.
\newblock {Phylogenetics}.
\newblock {\em Oxford University Press}, 2003.

\bibitem{Stanley}
R.~P. Stanley.
\newblock {\em Enumerative combinatorics. {V}olume 1}, volume~49 of {\em
  Cambridge Studies in Advanced Mathematics}.
\newblock Cambridge University Press, Cambridge, second edition, 2012.

\bibitem{StuYu}
B.~Sturmfels and J.~Yu.
\newblock Classification of six-point metrics.
\newblock {\em Electron. J. Combin.}, 11(1):Research Paper 44, 16, 2004.

\bibitem{Vershik1}
A.~M. Vershik.
\newblock Classification of finite metric spaces and combinatorics of convex
  polytopes.
\newblock {\em Arnold Math. J.}, 1(1):75--81, 2015.

\bibitem{Wu}
Y.~Wu, Z.~Xu, and Y.~Zhu.
\newblock Average range of {L}ipschitz functions on trees.
\newblock {\em Mosc. J. Comb. Number Theory}, 6(1):96--116, 2016.

\bibitem{Zaslavsky}
T.~Zaslavsky.
\newblock {\em {F}acing up to arrangements: face-count formulas for partitions
  of space by hyperplanes}.
\newblock ProQuest LLC, Ann Arbor, MI, 1974.
\newblock Thesis (Ph.D.)--Massachusetts Institute of Technology.

\bibitem{Ziegler}
G.~M. Ziegler.
\newblock {\em Lectures on polytopes}, volume 152 of {\em Graduate Texts in
  Mathematics}.
\newblock Springer-Verlag, New York, 1995.

\end{thebibliography}
 \bibliographystyle{plain}

\end{document}